\newtheorem{theorem}{Theorem}[section]
\newtheorem{proposition}[theorem]{Proposition}
\newtheorem{lemma}[theorem]{Lemma}
\newtheorem*{them}{Theorem A}{\bf}{\it}
\theoremstyle{remark}
\newtheorem{remark}[theorem]{Remark}
\theoremstyle{definition}
\newtheorem{definition}[theorem]{Definition}
\numberwithin{equation}{section}
\renewcommand{\vec}[1]{\pmb{#1}}
\begin{document}

\title{Homology of saddle point reduction
and applications to resonant elliptic systems
\thanks{Supported by NSFC (11071237, 11171204) and RFDP (20094402110001).}
}

\author{Chong Li $^{\text{a}}$         \and
        Shibo Liu $^{\text{b}}$\thanks{email: liusb@xmu.edu.cn}
}

\date{\small\it $^{\text{a}}$ Institute of Mathematics,
Chinese Academy of Sciences,
Beijing 100190, P.R. China\\
$^{\text{b}}$ School of Mathematical Sciences, Xiamen University, Xiamen 361005, PR China}
\maketitle

\begin{abstract}
\noindent In the setting of saddle point reduction, we prove that the critical
groups of the original functional and the reduced functional are
isomorphic. As application, we obtain two nontrivial solutions for
elliptic gradient systems which may be resonant both at the origin and at
infinity. The difficulty that the variational functional does not
satisfy the Palais-Smale condition is overcame by taking advantage
of saddle point reduction. Our abstract results on critical groups
are crucial.\medskip

\noindent\emph{Keywords: }Critical groups; saddle point reduction; K\"{u}nneth
formula; resonant elliptic systems\smallskip

\noindent\emph{MSC20000: }58E05; 35J60
\end{abstract}

\section{Introduction}

Infinite dimensional Morse theory (see \cite{MR1196690,MR982267} for
a a systematic exploration) is very useful in obtaining multiple
solutions for nonlinear variational problems. The central concept in
this theory is the critical group $C_{\ast}( f,u) $ for a
$C^{1}$-functional $f:X\rightarrow\mathsf{R}$ at an isolated
critical point $u$. The critical group describes the local property
of $f$ near the critical point $u$. On the other hand, Bartsch and
Li \cite{MR1420790} introduced the critical group $C_{\ast}(
f,\infty) $ of $f$ at infinity, which describes the global property
of the functional $f$.

With these concepts we have the Morse inequalities%
\begin{equation}
\sum_{q=0}^{\infty}M_{q}t^{q}=\sum_{q=0}^{\infty}\beta_{q}t^{q}+(
1+t) Q( t)
\text{,} \label{e1}%
\end{equation}
where $Q$ is a formal series with nonnegative integer coefficients,%
\[
M_{q}=\sum\nolimits_{f^{\prime}( u) =0}\operatorname*{rank}C_{q}(
f,u)
\text{,\qquad}\beta_{q}=\operatorname*{rank}C_{q}( f,\infty) \text{.}%
\]
In most applications, we may distinguish critical points using
critical group, and we may find new critical points using the Morse
inequality. Therefore, the study of the critical group is very
important.

In 1979, Amann \cite{MR550724} established the theory of saddle
point reduction (also called Lyapu\-nov-Schmidt reduction in some
literature). Since then, saddle point reduction becomes an important
method in critical point theory, and has been widely applied to
various nonlinear boundary value problems
\cite{MR1302162,MR2651745,MR2685145,MR1662078,MR2488059,MR1781225,MR1090484}.

Let $( X,\left\langle \cdot,\cdot\right\rangle ) $ be a separable
Hilbert space with norm $\left\Vert \cdot\right\Vert $, and $f\in
C^{1}( X,\mathsf{R}) $. The basic assumption in saddle point
reduction is the following

\begin{description}
\item[$( A_{\pm}) $] $X^{\pm}$ are closed subspaces of $X$ such that
$X=X^{-}\oplus X^{+}$, and there exists a real number $\kappa>0$ such that%
\[
\pm\left\langle \nabla f( v+w_{1}) -\nabla f( v+w_{2}) ,w_{1}-w_{2}%
\right\rangle \geq\kappa\left\Vert w_{1}-w_{2}\right\Vert ^{2}%
\]
for all $v\in X^{-}$ and $w_{1},w_{2}\in X^{+}$.
\end{description}

\noindent Then by saddle point reduction, there exist $\psi\in C(
X^{-},X^{+}) $ and $\varphi\in C^{1}( X^{-},\mathsf{R}) $ such that
$\bar{v}$ is a critical point of $\varphi$ if and only if
$\bar{v}+\psi( \bar{v}) $ is a critial point of $f$; moreover
we have%
\begin{equation}
\varphi( v) := f( v+\psi( v) ) =\max\limits_{w\in X^{+}}f( v+w)\text{,}\qquad\forall v\in X^- \label{e}%
\end{equation}
for case $( A_{-}) $ and with `$\max$' replaced by `$\min$' for case
$( A_{+}) $, see \cite{MR679138} for a good proof of these results.
Thus, to find critical points of $f$ we may consider the reduced
functional $\varphi$. Since $\varphi$ is defined on a subspace, it
should be easier to study.

As mentioned before, Morse theory is a powerful tool in the study of
variational problems. If we want to apply Morse theory, naturally we
need to study the relation between the critical group of $\varphi$
and that of $f$. In our previous work \cite{MR2348521,MR2017717}, we
proved the results described in the following theorem.

\begin{them}
Let $X$ be a separable Hilbert space and $f\in C^{1}( X,\mathsf{R})
$.

\begin{description}
\item[\rm(i)] If $( A_{+}) $ holds, $f$ satisfies the Palais-Smale $( PS) $ condition
with critical values bounded from below, then $C_{q}( f,\infty)
\cong C_{q}( \varphi,\infty) $ for $q=0,1,2,\cdots$.

\item[\rm(ii)] If $( A_{-}) $ holds and $\mu=\dim X^{+}<\infty$, $f$ satisfies $( PS) $
condition with critical values bounded from below, then%
\begin{equation}
C_{q}( f,\infty) \cong C_{q-\mu}( \varphi,\infty) \text{,\qquad for
}q=0,1,2,\cdots\text{.} \label{e2}%
\end{equation}

\item[\rm(iii)] If $( A_{+}) $ holds and $\bar{v}\in X^{-}$ is an isolated critical point of
$\varphi$, then%
\[
C_{q}( f,\bar{v}+\psi( \bar{v}) ) \cong C_{q}( \varphi,\bar{v}) \text{,\qquad for }%
q=0,1,2,\cdots\text{.}%
\]

\end{description}
\end{them}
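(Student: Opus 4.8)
The plan is to derive all three statements from one elementary deformation of $X$ (or of a small neighbourhood) along the fibres $v+X^{+}$ of the orthogonal projection $\pi\colon X\to X^{-}$, using the extremal characterization \eqref{e} of $\varphi$. Throughout, write $G=\{v+\psi(v):v\in X^{-}\}$ for the graph of $\psi$ and $j(v)=v+\psi(v)$; since $\psi$ is continuous, $j\colon X^{-}\to G$ is a homeomorphism with inverse $\pi|_{G}$, and $G\cap f^{a}=j(\varphi^{a})$ for every $a$. Hypothesis $(A_{\pm})$ makes $w\mapsto f(v+w)$ have a strongly monotone gradient on $X^{+}$, hence strictly convex in case $(A_{+})$ and strictly concave in case $(A_{-})$, with its unique extremum at $w=\psi(v)$; in particular in case $(A_{+})$ one has $f(v+w)\ge\varphi(v)$ with equality only when $w=\psi(v)$. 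Also, as the critical points of $f$ and $\varphi$ correspond, the two functionals have the same critical values, so in (i)--(ii) $\varphi$ too has critical values bounded from below; that $\varphi$ inherits $(PS)$ from $f$ is the standard companion fact of the reduction. Recall finally that, under these hypotheses, $C_{q}(f,\infty)=H_{q}(X,f^{a})$ for $a$ below all critical values, and $C_{q}(f,u)=H_{q}(W\cap f^{c},(W\cap f^{c})\setminus\{u\})$ for a small neighbourhood $W$ of $u$ meeting the critical set only in $u$.

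For parts (i) and (iii) (case $(A_{+})$) I would use the jointly continuous homotopy $\eta_{t}(v+w)=v+(1-t)w+t\psi(v)$. Convexity of $w\mapsto f(v+w)$ gives $f(\eta_{t}(v+w))\le(1-t)f(v+w)+t\varphi(v)\le f(v+w)$, so $\eta$ is nonincreasing along $f$; hence $\eta_{t}$ preserves every sublevel set $f^{a}$, fixes $G$ pointwise, and satisfies $\eta_{0}=\mathrm{id}$, $\eta_{1}(X)=G$. Thus $\eta$ is a strong deformation retraction of the pair $(X,f^{a})$ onto $(G,G\cap f^{a})$, which via $j$ is $(X^{-},\varphi^{a})$. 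Taking $a$ below all critical values yields $C_{q}(f,\infty)=H_{q}(X,f^{a})\cong H_{q}(X^{-},\varphi^{a})=C_{q}(\varphi,\infty)$, which is (i). For (iii), set $u=\bar v+\psi(\bar v)$, $c=\varphi(\bar v)=f(u)$, choose a small bounded neighbourhood $V$ of $\bar v$ in $X^{-}$ containing no other critical point of $\varphi$ and a $\delta>0$, and put $W=\{v+w:v\in V,\ \|w-\psi(v)\|<\delta\}$, an open neighbourhood of $u$ whose only critical point of $f$ is $u$. The same $\eta$ maps $W$ into $W$ (the $X^{-}$-component is unchanged and $\|(1-t)w+t\psi(v)-\psi(v)\|<\delta$), is nonincreasing along $f$, fixes $u$, and — here using that $\psi(\bar v)$ is the \emph{only} $w$ with $f(\bar v+w)\le c$ — keeps $(W\cap f^{c})\setminus\{u\}$ invariant for every $t$, while $\eta_{1}$ pushes $W\cap f^{c}$ into $j(\varphi^{c}\cap V)$. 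So $\eta$ deformation retracts $\big(W\cap f^{c},\,(W\cap f^{c})\setminus\{u\}\big)$ onto $\big(j(\varphi^{c}\cap V),\,j((\varphi^{c}\cap V)\setminus\{\bar v\})\big)\cong\big(\varphi^{c}\cap V,(\varphi^{c}\cap V)\setminus\{\bar v\}\big)$, giving $C_{q}(f,u)\cong C_{q}(\varphi,\bar v)$.

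For part (ii) (case $(A_{-})$, $\mu=\dim X^{+}<\infty$) I would first transport the problem by the homeomorphism $\Theta(v,z)=v+\psi(v)+z$ of $X^{-}\times X^{+}$ onto $X$, so that $C_{q}(f,\infty)=H_{q}(X,f^{a})\cong H_{q}(X^{-}\times X^{+},\widetilde f^{\,a})$ with $\widetilde f=f\circ\Theta$ and $a$ below all critical values. From $(A_{-})$ one gets $\widetilde f(v,z)\le\varphi(v)-\tfrac{\kappa}{2}\|z\|^{2}$ and $s\mapsto\widetilde f(v,sz)$ nonincreasing on $[0,\infty)$, so each fibre $\{z:\widetilde f(v,z)\le a\}$ is $X^{+}\cong\mathsf{R}^{\mu}$ minus a bounded open convex set $O_{v}\ni0$, which is empty exactly when $\varphi(v)\le a$. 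Picking a continuous $\rho\colon X^{-}\to(0,\infty)$ with $O_{v}\subseteq B_{\rho(v)/2}$, the fibrewise deformation that contracts $z$ to the sphere $\|z\|=\rho(v)$ whenever $\|z\|>\rho(v)$ (and is the identity otherwise) is a strong deformation retraction of $(X^{-}\times X^{+},\widetilde f^{\,a})$ onto $(D_{\rho},\widetilde f^{\,a}\cap D_{\rho})$, $D_{\rho}=\{\|z\|\le\rho(v)\}$; rescaling $z\mapsto z/\rho(v)$ identifies this with $\big(X^{-}\times\overline{D^{\mu}},\,(X^{-}\times\overline{D^{\mu}})\setminus\widetilde O\big)$, where $\widetilde O$ is fibrewise an open convex subset of $B_{1/2}$, empty exactly over $\varphi^{a}$. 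Granting that the inclusion
\[
M:=(X^{-}\times S^{\mu-1})\cup(\varphi^{a}\times\overline{D^{\mu}})\ \hookrightarrow\ (X^{-}\times\overline{D^{\mu}})\setminus\widetilde O
\]
is a homotopy equivalence, the pair $(X^{-}\times\overline{D^{\mu}},M)$ is the product of pairs $(X^{-},\varphi^{a})\times(\overline{D^{\mu}},S^{\mu-1})$, and the relative K\"unneth formula — with no Tor contribution, since $H_{\ast}(\overline{D^{\mu}},S^{\mu-1})\cong\mathsf{R}$ concentrated in degree $\mu$ — gives $H_{q}(X,f^{a})\cong H_{q-\mu}(X^{-},\varphi^{a})$, that is \eqref{e2}.

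The main obstacle is precisely the homotopy equivalence just assumed in (ii). Fibrewise it is clear: over $\varphi^{a}$ the inclusion is the identity of $\overline{D^{\mu}}$, and over $\{\varphi>a\}$ it is $S^{\mu-1}\hookrightarrow\overline{D^{\mu}}\setminus\widetilde O_{v}$, a homotopy equivalence by convexity of $\widetilde O_{v}$. But the fibre homotopy type jumps — disk versus sphere — exactly across the level set $\{\varphi=a\}$, so there is no naive fibrewise deformation retraction, and a genuine patching argument is needed: glue the fibrewise radial retractions over $\{\varphi>a\}$ by a partition of unity and absorb the transition near $\{\varphi=a\}$ by excision, using that $a$ is a regular value and that $(PS)$ for $\varphi$ lets $\{\varphi\le a+\varepsilon\}$ be deformed onto $\varphi^{a}$. (Alternatively, part (ii) may simply be quoted from \cite{MR2348521,MR2017717}.) Parts (i) and (iii) carry no comparable difficulty; the one subtlety in (iii) is the use of the uniqueness of the minimiser $\psi(\bar v)$ to ensure that $\eta$ does not collapse points of $(W\cap f^{c})\setminus\{u\}$ onto $u$.
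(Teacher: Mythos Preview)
Note first that Theorem~A is not proved in this paper; it is quoted from \cite{MR2348521,MR2017717}. However, the proof of Theorem~\ref{t1} describes the two ingredients of the argument for (ii) in \cite{MR2017717} --- a homotopy $F$ (built from $(A_{-})$ and the implicit function theorem) carrying $f_{a}$ to $A=(\varphi_{a}\times X^{+})\cup\{(v,w):\varphi(v)>a,\ w\neq\psi(v)\}$, and the shift $G:(v,w)\mapsto(v,w-\psi(v))$ carrying $A$ to $B=(\varphi_{a}\times X^{+})\cup\big((X^{-}\setminus\varphi_{a})\times S\big)$ with $S=X^{+}\setminus\{0\}$ --- so a comparison is possible.

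Your proofs of (i) and (iii) are correct; the linear fibrewise homotopy $\eta_{t}$ together with convexity is the right tool, and the point you single out in (iii) (that the fibre over $\bar v$ meets $f^{c}$ only at $u$, by strict convexity) is the one genuine subtlety.

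For (ii) your route diverges from \cite{MR2017717} in a way that makes your ``main obstacle'' look harder than it is. The cited proof never passes to a disk bundle: after the shift (your $\Theta$) one has $(X,B)$, and this is \emph{already} the product pair $(X^{-},\varphi_{a})\times(X^{+},X^{+}\setminus\{0\})$, so K\"unneth applies at once. The only work is the homotopy equivalence $\tilde f^{\,a}\simeq B$, and this can be written down globally: for $z\neq0$ and $\varphi(v)>a$ let $\sigma(v,z)$ be the unique $s>0$ with $\tilde f(v,s\,z/\|z\|)=a$, and set $\sigma=0$ when $\varphi(v)\le a$; strong concavity gives $0\le\sigma\le\sqrt{2(\varphi(v)-a)_{+}/\kappa}$, so $\sigma$ is continuous \emph{across} $\{\varphi=a\}$, and the outward push $(v,z)\mapsto\big(v,\max(\|z\|,\sigma)\,z/\|z\|\big)$ (identity at $z=0$) retracts $B$ onto $\tilde f^{\,a}$ with no patching and no deformation lemma. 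Your disk restriction forces the retraction to go \emph{inward} toward $S^{\mu-1}$, and that is precisely what fails to extend continuously over $\{\varphi=a\}$. Your proposed rescue via excision and the second deformation lemma would work, but that is exactly the extra machinery the paper needs only for the \emph{local} Theorem~\ref{t1}, where a neighbourhood must be respected; at infinity it is unnecessary.
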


In view of Theorem A, we naturally expect that in case $( A_{-}) $
there should have a relation similar to \eqref{e2} for the critical
groups at isolated critical points. In this paper, we will establish
such a relation in the following theorem.

\begin{theorem}
\label{t1}Let $X$ be a separable Hilbert space and $f\in
C^{1}(X,\mathsf{R})$. Assume $(A_{-})$ holds and $\mu=\dim
X^{+}<\infty$; $\bar{v}\in X^{-}$ is
an isolated critical point of $\varphi$ such that $\varphi(\bar{v})$
is an isolated critical value. If moreover $\varphi$
satisfies the $( PS) $ condition, then%
\[
C_{q}(f,\bar{v}+\psi(\bar{v}))\cong
C_{q-\mu}(\varphi,\bar{v})\text{,\qquad
for }q=0,1,2,\cdots\text{.}%
\]
\end{theorem}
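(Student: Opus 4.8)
The plan is to reduce $f$, near the critical point $u:=\bar v+\psi(\bar v)$, to a direct-sum model $\varphi(v)-\|\xi\|^2$ on $X^-\times X^+$ and then invoke the product (K\"unneth) formula for critical groups. Here the critical group is taken in its homological form, $C_q(g,p)=H_q\big(g^c\cap U,\,(g^c\setminus\{p\})\cap U\big)$ with $c=g(p)$ and $U$ a small neighborhood of the isolated critical point $p$, so that it is invariant under every local homeomorphism preserving the value of the function.

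First I would transfer everything to $X^-\times X^+$ through the linear homeomorphism $L(v,w)=v+w$: then $f$ corresponds to $F\in C^1(X^-\times X^+,\mathsf R)$, $F(v,w)=f(v+w)$, whose critical points are exactly the pairs $(v,\psi(v))$ with $\nabla\varphi(v)=0$, so $(\bar v,\psi(\bar v))$ is an isolated critical point of $F$ and $C_q(f,u)\cong C_q(F,(\bar v,\psi(\bar v)))$. Composing with the homeomorphism $T(v,\xi)=(v,\xi+\psi(v))$, whose inverse $(v,w)\mapsto(v,w-\psi(v))$ is continuous since $\psi\in C(X^-,X^+)$, replaces $F$ by $\tilde F:=F\circ T$, $\tilde F(v,\xi)=f(v+\psi(v)+\xi)$; by \eqref{e} we have $\tilde F(v,0)=\varphi(v)$, and $C_q(F,(\bar v,\psi(\bar v)))\cong C_q(\tilde F,(\bar v,0))$.

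The analytic core is a one-sided estimate from $(A_-)$. Since $\psi(v)$ is the critical point of $w\mapsto f(v+w)$ on $X^+$, one has $\langle\nabla f(v+\psi(v)),\xi\rangle=0$ for all $\xi\in X^+$, hence for $\theta\in X^+$ with $\|\theta\|=1$ and $r>0$
\[
\frac{d}{dr}\,\tilde F(v,r\theta)=\langle\nabla f(v+\psi(v)+r\theta),\theta\rangle
=\frac1r\big\langle\nabla f(v+\psi(v)+r\theta)-\nabla f(v+\psi(v)),\,r\theta\big\rangle\le-\kappa r,
\]
so $r\mapsto\tilde F(v,r\theta)$ is strictly decreasing and, integrating up to $r=\|\xi\|$, $\tilde F(v,\xi)\le\varphi(v)-\tfrac{\kappa}{2}\|\xi\|^2$ for all $v\in X^-$, $\xi\in X^+$. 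Fixing a small $\rho>0$ and $\varepsilon$ with $\varepsilon^2\le\tfrac{\kappa}{2}\rho^2$, and a small ball $W^-=B_\delta(\bar v)\cap X^-$, I would then define on $W^-\times\{\|\xi\|\le\varepsilon\}$ the fiber-preserving map $h(v,\xi)=\big(v,\,r(v,\xi)\,\xi/\|\xi\|\big)$ (and $h(v,0)=(v,0)$), where $r(v,\xi)\in[0,\rho]$ is the unique solution of $\tilde F\big(v,r\,\xi/\|\xi\|\big)=\varphi(v)-\|\xi\|^2$; existence and uniqueness follow from the strict radial monotonicity and the intermediate value theorem (the estimate guarantees the target value is attained before $\|\xi\|=\rho$), and continuity of $h$ and of $h^{-1}$ follows from the continuity of $\tilde F$ and $\varphi$ together with $\tfrac{\kappa}{2}r(v,\xi)^2\le\varphi(v)-\tilde F\big(v,r(v,\xi)\,\xi/\|\xi\|\big)=\|\xi\|^2$. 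Thus $h$ is a homeomorphism of $W^-\times\{\|\xi\|\le\varepsilon\}$ onto a neighborhood of $(\bar v,0)$ with $\tilde F\circ h(v,\xi)=\varphi(v)-\|\xi\|^2=:\tilde G(v,\xi)$, so $C_q(\tilde F,(\bar v,0))\cong C_q(\tilde G,(\bar v,0))$.

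Finally, $\tilde G=\varphi\oplus\big(-\|\cdot\|^2\big)$ is a direct sum on $X^-\times\mathsf R^{\mu}$; $\bar v$ is an isolated critical point of $\varphi$ with $\varphi$ satisfying $(PS)$, while $0$ is a nondegenerate maximum of $-\|\cdot\|^2$ on $\mathsf R^{\mu}$, so $C_j\big(-\|\cdot\|^2,0\big)\cong\mathsf R$ for $j=\mu$ and $0$ otherwise. The product formula for critical groups — the K\"unneth formula applied to the product of Gromoll--Meyer pairs, with no Tor term because the finite-dimensional factor is free — then gives
\[
C_q(\tilde G,(\bar v,0))\cong\bigoplus_{i+j=q}C_i(\varphi,\bar v)\otimes C_j\big(-\|\cdot\|^2,0\big)\cong C_{q-\mu}(\varphi,\bar v),
\]
and composing the isomorphisms above yields $C_q(f,u)\cong C_{q-\mu}(\varphi,\bar v)$. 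I expect the third step to be the main obstacle: one must check carefully that $h$ really is a homeomorphism between neighborhoods of $(\bar v,0)$ — using only the one-sided bound from $(A_-)$ and continuity, since no Lipschitz bound on $\nabla f$ is available — and that the product formula genuinely applies in this infinite-dimensional-times-finite-dimensional setting, for which the $(PS)$ condition and the isolatedness of $\bar v$ provide the needed Gromoll--Meyer pair for $\varphi$.
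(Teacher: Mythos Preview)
Your argument is correct and follows a genuinely different route from the paper's. The paper works directly with the sublevel set $f_a$: it quotes a homotopy equivalence, established in an earlier paper by the same authors, between $f_a$ and $(\varphi_a\times X^+)\cup\big((X^-\setminus\varphi_a)\times(X^+\setminus\{0\})\big)$; localizes that equivalence over an $\eta$-invariant neighborhood of $\bar v$ built from the second deformation lemma (this is exactly where the isolated-critical-value hypothesis and $(PS)$ enter); uses the flow $\eta$ to collapse the second piece into $\varphi_a\times X^+$; and finishes with K\"unneth applied to the product pair $(\varphi_a,\varphi_a\setminus\bar v)\times(X^+,X^+\setminus\{0\})$. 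Your approach instead manufactures a local \emph{change of coordinates}: the strict radial decrease coming from $(A_-)$ lets you straighten each fibre function $\xi\mapsto f(v+\psi(v)+\xi)$ into $-\|\xi\|^2$, yielding a Morse-lemma-type normal form $\varphi(v)-\|\xi\|^2$ near the critical point, after which the product (shifting) formula for critical groups via Gromoll--Meyer pairs applies. What you gain is self-containment (no reliance on the global sublevel-set deformations from the earlier paper) and a transparent $C^1$ splitting lemma that uses only the one-sided bound in $(A_-)$; moreover, your argument appears not to use the isolated-critical-\emph{value} hypothesis at all, since local $(PS)$ plus isolation of the critical point already yield the Gromoll--Meyer pair for $\varphi$, so you have in effect proved a slightly sharper statement. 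The paper's route, in return, is a natural localization of the companion argument for the critical groups at infinity in Theorem~A(ii), and avoids the somewhat delicate verification (which you rightly flag) that your radial reparametrization $h$ is a homeomorphism onto a genuine neighborhood of $(\bar v,0)$.
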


This theorem and Theorem A completely describe the relation between
the critical groups of the original functional $f$ and the reduced
functional $\varphi$.

Under the assumption of Theorem \ref{t1}, $\bar{v}+\psi(\bar{v})$ is an isolated critical point of $f$. It is also well known that if $f$ satisfies the $( PS) $ condition, so does $\varphi$, see \cite[Lemma 1]{MR1296115}. However, we emphasizes that in Theorem \ref{t1} it is not necessary to require that $f$ satisfies $( PS) $. This is very important in applications.

Let $\operatorname*{ind}( T,u) $ denotes the Leray--Schauder index
for $T:X\to X$ (a compact perturbation of the identity map) at its
isolated zero point $u$ and assume that $\nabla f$ is a compact
perturbation of the identity. Similar to \cite[Corollary
2.4]{MR2348521}, as a corollary of Theorem \ref{t1} and the
Poincar\'{e}-Hopf formula for $C^{1}$-functional \cite[Theorem
3.2]{MR2124871}
\[
\operatorname*{ind}( \nabla f,u) =\sum_{q=0}^{\infty}( -1) ^{q}%
\operatorname*{rank}C_{q}( f,u) \text{,}%
\]
if $( A_{-}) $ holds and $\mu=\dim X^{+}<\infty$, we have
\begin{equation}
\operatorname*{ind}( \nabla f,\bar{v}+\psi( \bar{v}) ) =( -1)
^{\mu}\operatorname*{ind}(
\nabla\varphi,\bar{v}) \text{.} \label{e3}%
\end{equation}
The corresponding result for case $( A_{+}) $, namely
\cite[Corollary 2.4]{MR2348521}, is originally due to Lazer and
McKenna \cite{MR787722}. As far as we know, the identity \eqref{e3}
does not appear elsewhere.

Our investigation of the relation between the critical groups of the
original functional and the reduced functional is motivated by the
study of multiple solutions for nonlinear boundary value problems.
In the second part of this paper, as applications of our abstract
results we consider asymptotically linear elliptic systems. Such
problems have attracted some attentions in recent years, see
\cite{MR2651745,MR2685145,MR2227915,MR1813819,MR1697052}.

Our assumptions on the nonlinearity are so weak that the
corresponding Euler-Lagrange functional does not satisfy the $( PS)
$ condition. Nevertheless, using some idea from
\cite{MR2488059,MR1781225}, by taking advantage of saddle point
reduction we can overcome this difficulty. As we will see in Remark
\ref{rek2}, because the asymptotic limits may be different variable
matrices, the local linking argument used in
\cite{MR2488059,MR1781225} does not apply. To prove our multiplicity
results (Theorems \ref{st1} and \ref{st2}), Theorem A (iii) and
Theorem \ref{t1} are crucial.

\section{Proof of Theorem \ref{t1}}

Let $X$ be a Banach space and $f\in C^{1}( X,\mathsf{R}) $. Let $u$
be an
isolated critical point of $f$ with critical
value $c=f( u) $, $\Omega$ be an arbitrary neighborhood of $u$. Then the group%
\[
C_{q}( f,u) =H_{q}( f_{c}\cap\Omega,(f_{c}\cap\Omega)\backslash\left\{  u\right\}  )
\text{,\qquad }q=0,1,2,\cdots
\]
is called the $q^{\operatorname*{th}}$ critical group of $f$ at $u$.
Here $f_{c}=f^{-1}(-\infty,c]$, $H_{q}( A,B) $ stands for the
$q^{\operatorname*{th}}$ singular relative homology group of the
topological pair $( A,B) $ with coefficients in a field
$\mathcal{G}$. By the excision property of homology, the critical
groups of $f$ at $u$ described the local property of $f$ near $u$.

If $f$ satisfies the $( PS) $ condition and the critical values of
$f$ are bounded from below by $\alpha\in\mathsf{R}$, then according
to
\cite[Definition 3.4]{MR1420790}, the group%
\begin{equation}
C_{q}( f,\infty) =H_{q}( X,f_{\alpha}) \text{,\qquad}q=0,1,2,\cdots\label{e4}%
\end{equation}
is called the $q^{\operatorname*{th}}$ critical group of $f$ at
infinity. Since $f$ satisfies $( PS) $, by the deformation lemma,
the right hand side of \eqref{e4} does not depend on the choose of
$\alpha$. Since all critical points of $f$ are contained in
$X\backslash f_{\alpha}$, we can say that the critical groups of $f$
at infinity describe the global property of $f$.

From the definitions of critical groups, we see that analytically,
$C_{\ast}( f,u) $ is simpler than $C_{\ast}( f,\infty) $, because the former does not require the $(PS)$ condition; while
topologically, $C_{\ast}( f,u) $ is more complicated than $C_{\ast}(
f,\infty) $, because the topological pair on the right hand side of \eqref{e4} is simpler. This explains why the results in Theorem A (i) and (ii)
were proved first.

\begin{proof}
[Proof of Theorem \ref{t1}]
Assume $\varphi(\bar{v})=f(\bar{v}+\psi(\bar{v}))=a$. Since $a$ is an isolated
critical value of $\varphi$, there is an $\varepsilon>0$ such that $\varphi$
has no critical value in $(a,a+\varepsilon]$. Since $\varphi$ satisfies $(PS)$, by the second deformation lemma
\cite{MR1196690,MR891261}, there is a continuous $\eta:\left[  0,1\right]
\times\varphi_{a+\varepsilon}\rightarrow\varphi_{a+\varepsilon}$ such that%
\begin{equation}
\left.
\begin{array}
[c]{ll}%
\eta(0,u)=u\text{,} & u\in\varphi_{a+\varepsilon}\text{,}\\
\eta(1,\varphi_{a+\varepsilon})\subset\varphi_{a}\text{,} & \\
\eta(t,u)=u\text{,} & (t,u)\in\left[  0,1\right]  \times\varphi_{a}\text{.}%
\end{array}
\right\}  \label{defo}%
\end{equation}
Let $O\subset\varphi_{a+\varepsilon}$ be a neighborhood of $\bar{v}$ such that
$\varphi$ has no critical point in $O\backslash\left\{  \bar{v}\right\}  $,
and set%
\[
U=\bigg(  \bigcup_{t\in\left[  0,1\right]  }\eta(t,O)\bigg)  \cup\varphi
_{a}\text{.}%
\]
Then $U$ is an $\eta$-invariant neighborhood of $\bar{v}$, and $\Omega=U\times X^{+}$ is a
neighborhood of $(\bar{v},\psi(\bar{v}))$.

By the property of $\varphi$ and $\psi$ described in \eqref{e}, if
$\varphi(v)\leq a$, then for any $w\in X^{+}$ we have $f(v+w)\leq a$. Thus,
setting%
\[
\Theta=\left\{  \left.  (v,w)\right\vert \,f(v+w)\leq a,\varphi(v)>a\right\}
\text{,}%
\]
we have $f_{a}=(\varphi_{a}\times X^{+})\cup\Theta$.

Under the assumption $(A_{-})$, it has been shown in the proof of
\cite[Theorem 1.2]{MR2017717} that $f_{a}$ is homotopically equivalent to%
\[
A=(\varphi_{a}\times X^{+})\cup\left\{  \left.  (v,w)\right\vert
\,\varphi(v)>a,w\neq\psi(v)\right\}
\]
via a homotopy $F:\left[  0,1\right]  \times A\rightarrow A$ constructed in
that proof (using condition $(A_{-})$ and the implicit function theorem).
Moreover, denoting%
\[
S=\left\{  \left.  w\in X^{+}\right\vert \,w\neq0\right\}  \text{,}%
\]
a homeomorphism $G$ between $A$ and%
\[
B=(\varphi_{a}\times X^{+})\cup\big((X^{-}\backslash\varphi_{a})\times S\big)
\]
has also been given there. The deformations $F$ and $G$ have been illustrated
in Figure \ref{fig1}, where the thick segments with endpoint $\bar{v}$ represent the level set $\varphi_a$; while the shadowed rigions in the three subfigures
represent the sets $f_{a}$, $A$ and $B$ respectively.

\begin{figure}[h]
\centerline{\includegraphics[width=0.99
\textwidth]{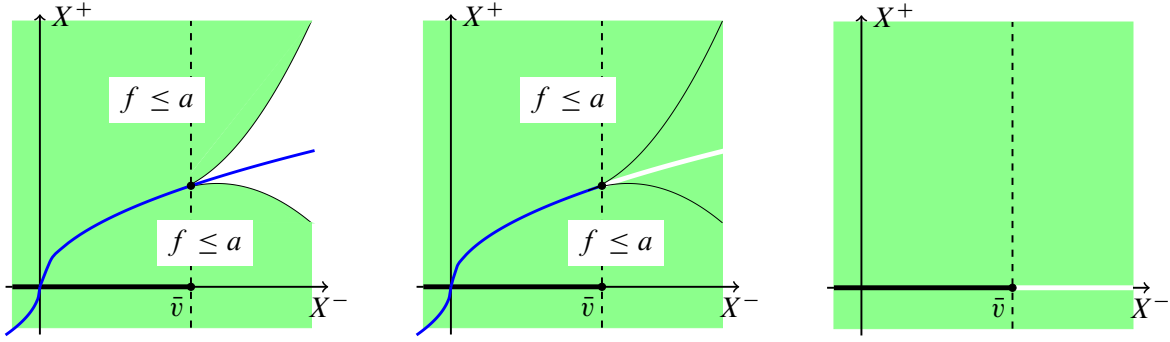}}\caption{Deformations of the level set $f_a$.}
\label{fig1}
\end{figure}

Let $\tilde{F}$ be the homotopy inverse of $F$, then $\Gamma=G\circ\tilde{F}$
is a homotopic equivalance between $f_{a}$ and $B$. By the definitions of $F$
and $G$ (see the proof of \cite[Theorem 1.2]{MR2017717}), we see that $\Gamma$
does not change the $v$-variable. Therefore, restricting $\Gamma$ to
$f_{a}\cap\Omega$, we obtain%
\[
f_{a}\cap\Omega\approx B\cap\Omega=( \varphi_{a}\times X^{+}) \cup\big( \big(
(X^{-}\backslash\varphi_{a})\cap U\big) \times S\big)
\]

Since $U$ is invariant under the flow $\eta$,  we can define $H:\left[  0,1\right]  \times( B\cap\Omega) \rightarrow
B\cap\Omega$,%
\[
H(t,(v,w))=\left\{
\begin{array}
[c]{ll}%
(v,w)\text{,} & \text{if }(v,w)\in\varphi_{a}\times X^{+}\text{,}\\
( \eta( t,v) ,w) \text{,} & \text{if }(v,w)\in\big( (X^{-}\backslash\varphi
_{a})\cap U\big) \times S\text{.}%
\end{array}
\right.
\]
Using \eqref{defo}, it is easy to see that $H$ is continuous, and $H(1,\cdot)$
is a homotopic equivalance between $B\cap\Omega$ and $\varphi_{a}\times X^{+}%
$. Combining the above homotopies, we can deform $f_{a}\cap\Omega$ to
$\varphi_{a}\times X^{+}$ continuously. The deformation maps $(\bar{v}%
,\psi(\bar{v}))$ to $(\bar{v},0)$, therefore it is also a homotopic
equivalance between $( f_{a}\cap\Omega) \backslash(\bar{v},\psi(\bar{v}))$ and
$(\varphi_{a}\times X^{+})\backslash(\bar{v},0)$.

Noting that $(\varphi_{a}\times X^{+})\backslash(\bar{v},0)=(\varphi_{a}\times
S)\cup((\varphi_{a}\backslash\bar{v})\times X^{+})$, we have%
\begin{align*}
\big( f_{a}\cap\Omega,( f_{a}\cap\Omega) \backslash(\bar{v},\psi(\bar{v}))\big)  &
\simeq( \varphi_{a}\times X^{+},(\varphi_{a}\times X^{+})\backslash(\bar
{v},0))\\
&  =\big( \varphi_{a}\times X^{+},(\varphi_{a}\times S)\cup((\varphi_{a}\backslash
\bar{v})\times X^{+})\big)\\
&  =(\varphi_{a},\varphi_{a}\backslash\bar{v})\times(X^{+},S)\text{.}%
\end{align*}
Passing to homology and applying the K\"{u}nneth formula, we deduce%
\begin{align*}
C_{\ast}(f,\bar{v}+\psi(\bar{v}))  &  =H_{\ast}\big( f_{a}\cap\Omega,( f_{a}%
\cap\Omega) \backslash(\bar{v},\psi(\bar{v}))\big)\\
&  \cong H_{\ast}( (\varphi_{a},\varphi_{a}\backslash\bar{v})\times
(X^{+},S))\\
&  =H_{\ast}(\varphi_{a},\varphi_{a}\backslash\bar{v})\otimes H_{\ast}%
(X^{+},S)\\
&  =H_{\ast-\mu}(\varphi_{a},\varphi_{a}\backslash\bar{v})=C_{\ast-\mu
}(\varphi,\bar{v})\text{,}%
\end{align*}
where we have used the fact that $H_{q}(X^{+},S)=\delta_{q,\mu}\mathcal{G}$,
since $\dim X^{+}=\mu$.
\end{proof}

In critical point theory, it will be very convenient if the gradient
of the functional under consideration is a compact perturbation of
the identity operator. Hence, if the original functional $f$ has
this property, we hope that the reduced functional $\varphi$ also
has such property. This is true if $\nabla f:X\rightarrow X$ maps
bounded sets to bounded sets.

\begin{proposition}
[{\cite[Corollary 2.2]{MR2488059}}]\label{p2}Let $X$ be a separable
Hilbert space and $f\in C^{1}( X,\mathsf{R}) $. Assume $( A_{+}) $
or $( A_{-}) $ holds. If $\nabla f:X\rightarrow X$ is bounded and
there is a compact operator $K:X\rightarrow X$ such that $\nabla
f=\vec{1}_{X}-K$, then there is a compact operator
$Q:X^{-}\rightarrow X^{-}$ such that $\nabla\varphi=\vec{1}_{(
X^{-}) }-Q$.
\end{proposition}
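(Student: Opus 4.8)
The plan is to start from the characterization \eqref{e} of $\psi$ together with the first-order optimality condition it yields, and then write $\nabla\varphi$ explicitly in terms of $\nabla f$ and $\psi$. Recall that for case $(A_{-})$ the map $\psi:X^{-}\to X^{+}$ is determined by the equation $P^{+}\nabla f(v+\psi(v))=0$, where $P^{\pm}:X\to X^{\pm}$ are the orthogonal projections; differentiating the reduced functional $\varphi(v)=f(v+\psi(v))$ and using this identity, one gets the clean formula $\nabla\varphi(v)=P^{-}\nabla f(v+\psi(v))$ (the $X^{+}$-component of $\nabla f$ is annihilated along the graph of $\psi$, so the derivative of $\psi$ drops out). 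This reduces the problem to analyzing the operator $v\mapsto P^{-}\nabla f(v+\psi(v))$.

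Next I would insert the hypothesis $\nabla f=\vec{1}_{X}-K$. Then
\[
\nabla\varphi(v)=P^{-}\big(v+\psi(v)-K(v+\psi(v))\big)=v-P^{-}K\big(v+\psi(v)\big)\text{,}
\]
since $P^{-}v=v$ and $P^{-}\psi(v)=0$. So the candidate for $Q$ is $Q(v)=P^{-}K(v+\psi(v))$, i.e. $Q=P^{-}\circ K\circ(\,\vec{1}_{(X^{-})}+\psi\,)$, and it only remains to show $Q:X^{-}\to X^{-}$ is compact. Here $\psi$ is merely continuous (it is given by Theorem A / the saddle point reduction only as a $C^{0}$ map into $X^{+}$), so I cannot argue via compactness of a derivative; instead I would use the sequential definition of compactness.

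The key step, and the one I expect to be the main obstacle, is to promote boundedness of $\psi$ on bounded sets to the conclusion that $K\circ(\vec{1}+\psi)$ maps bounded sets into relatively compact sets. The point is that $\psi(v)$ solves $P^{+}\big(v+\psi(v)-K(v+\psi(v))\big)=0$, i.e. $\psi(v)=P^{+}K(v+\psi(v))$; since $K$ is compact, for $v$ ranging over a bounded set $D\subset X^{-}$ the set $\{\psi(v):v\in D\}$ is contained in $P^{+}K(\text{bounded set})$ provided one first knows $\{\psi(v):v\in D\}$ is bounded — and that follows by testing the monotonicity condition $(A_{-})$ against $w_{1}=\psi(v)$, $w_{2}=0$, using $\kappa\|\psi(v)\|^{2}\le -\langle\nabla f(v+\psi(v))-\nabla f(v),\psi(v)\rangle\le (\|\nabla f(v+\psi(v))\|+\|\nabla f(v)\|)\|\psi(v)\|$ and the hypothesis that $\nabla f$ is bounded. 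Hence $\psi(D)$ is bounded, so $v+\psi(v)$ lies in a bounded subset of $X$ for $v\in D$, so $K(v+\psi(v))$ lies in a relatively compact set, and therefore so does $Q(v)=P^{-}K(v+\psi(v))$ (the continuous projection $P^{-}$ preserves relative compactness). Combined with the continuity of $Q$ (immediate from continuity of $\psi$, $K$ and $P^{-}$), this gives that $Q$ is a compact operator and $\nabla\varphi=\vec{1}_{(X^{-})}-Q$, as required. The case $(A_{+})$ is identical after swapping the roles of $X^{+}$ and $X^{-}$ and replacing $\max$ by $\min$.
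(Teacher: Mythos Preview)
The paper does not supply a proof of this proposition; it is quoted verbatim from \cite[Corollary 2.2]{MR2488059} and used as a black box, so there is no argument in the present paper to compare yours against. Your approach---identifying $\nabla\varphi(v)=P^{-}\nabla f(v+\psi(v))$, substituting $\nabla f=\vec{1}_{X}-K$, and setting $Q=P^{-}\circ K\circ(\vec{1}_{(X^{-})}+\psi)$---is the standard one and is correct in outline.

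There is, however, a small circularity in your boundedness step for $\psi$. You bound $\kappa\|\psi(v)\|$ by $\|\nabla f(v+\psi(v))\|+\|\nabla f(v)\|$ and then invoke ``$\nabla f$ is bounded''; but in this context ``bounded'' means \emph{maps bounded sets to bounded sets}, so controlling $\|\nabla f(v+\psi(v))\|$ already presupposes a bound on $\psi(v)$. The fix is immediate and uses an identity you have already recorded: since $\psi(v)\in X^{+}$ and $P^{+}\nabla f(v+\psi(v))=0$, one has $\langle\nabla f(v+\psi(v)),\psi(v)\rangle=0$, and the inequality coming from $(A_{-})$ with $w_{1}=\psi(v)$, $w_{2}=0$ collapses to
\[
\kappa\|\psi(v)\|^{2}\le\langle\nabla f(v),\psi(v)\rangle\le\|\nabla f(v)\|\,\|\psi(v)\|\text{,}
\]
whence $\|\psi(v)\|\le\kappa^{-1}\|\nabla f(v)\|$, which is bounded for $v$ in a bounded set. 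With this adjustment your argument goes through.
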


Finally, for the convenience of our later application, we recall a
homological version of the famous three critical points theorem.

\begin{proposition}
[{\cite[Theorem 2.1]{MR1828101}}]\label{ls}Let $X$ be a Banach space
and $f\in C^{1}(X,\mathsf{R})$ satisfy the Palais-Smale $(PS)$
condition. Assume that $f$ is bounded from below. If
$C_{\ell}(f,\vec{0})\neq0$ for some $\ell\neq0$, then $f$ has at
least three critical points.
\end{proposition}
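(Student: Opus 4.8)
The plan is to argue by contradiction. If $f$ has infinitely many critical points there is nothing to prove, so assume it has finitely many — hence all isolated — and suppose for contradiction that there are at most two.

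First I would exhibit two critical points. Since $f$ is bounded from below and satisfies $(PS)$, it is standard (e.g.\ via Ekeland's variational principle) that $f$ attains its infimum at a critical point $p$. Because $C_{\ell}(f,\vec{0})\neq0$ for some $\ell\neq0$, the point $\vec{0}$ cannot be a local minimum of $f$: an isolated critical point that is a local minimum has $C_{q}(f,u)\cong\delta_{q,0}\mathcal{G}$ for all $q$. Hence $p\neq\vec{0}$, so under our assumption $f$ has exactly the two critical points $\vec{0}$ and $p$. Moreover $f(p)=\inf f=:b$ is strictly less than $a:=f(\vec{0})$ — otherwise $\vec{0}$ would sit at the minimum level and be a local minimum — and $\vec{0}$ (respectively $p$) is the only critical point on the level $a$ (respectively $b$).

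Next I would analyse the sublevel sets. Choose $\varepsilon\in(0,a-b)$ and $c_{1}$ with $b<c_{1}<a-\varepsilon$. Since $b=\inf f$ we have $f_{b-\varepsilon}=\emptyset$, and crossing the single critical level $b$ — at which $p$ is a local minimum — the deformation lemma gives $H_{q}(f_{c_{1}})\cong H_{q}(f_{c_{1}},\emptyset)\cong C_{q}(f,p)\cong\delta_{q,0}\mathcal{G}$, so $f_{c_{1}}$ is nonempty, connected and acyclic. Since there is no critical value in $[a+\varepsilon,\infty)$, the deformation lemma also shows $X$ deformation retracts onto $f_{a+\varepsilon}$; as $X$ is contractible, $f_{a+\varepsilon}$ is likewise nonempty, connected and acyclic. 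Finally, $a$ being the only critical value in $(c_{1},a+\varepsilon]$ with $\vec{0}$ its only critical point, the usual Morse-theoretic identification (replace $f_{a-\varepsilon}$ by $f_{c_{1}}$ using a deformation retract, then cross the level $a$) gives $H_{*}(f_{a+\varepsilon},f_{c_{1}})\cong C_{*}(f,\vec{0})$.

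To conclude, I would plug these facts into the long exact homology sequence of the pair $(f_{a+\varepsilon},f_{c_{1}})$: since both $f_{a+\varepsilon}$ and $f_{c_{1}}$ are connected and acyclic, the sequence forces $H_{*}(f_{a+\varepsilon},f_{c_{1}})=0$, whence $C_{*}(f,\vec{0})=0$, contradicting $C_{\ell}(f,\vec{0})\neq0$. Thus $f$ has at least three critical points. The hard part is the two global deformation statements — that $X$ retracts onto $f_{a+\varepsilon}$ (which needs $(PS)$ and the absence of critical values above $a$, and has to work even when $f$ is unbounded above) and that crossing a single critical level identifies relative homology with the critical group. Purely homological input does not suffice: the Morse inequalities together with $C_{*}(f,\infty)\cong\delta_{*,0}\mathcal{G}$ (here $f_{\alpha}=\emptyset$ for $\alpha<\inf f$ and $X$ is contractible) and the fact that $\vec{0}$ is not a local minimum only force the Poincar\'{e} series of $C_{*}(f,\vec{0})$ to be divisible by $1+t$, which is consistent with $C_{*}(f,\vec{0})\neq0$.
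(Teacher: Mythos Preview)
The paper does not give its own proof of this proposition: it is quoted verbatim from Liu and Su \cite{MR1828101}, with only the added remark that $C_{\ell}(f,\vec{0})\neq0$ for some $\ell\neq0$ forces $\vec{0}$ not to be a local minimizer (the hypothesis in the original Liu--Su statement). So there is nothing to compare against.

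Your argument is correct and is essentially the standard Morse-theoretic proof behind three-critical-points results of this type. The key steps---$f$ attains its infimum at some $p\neq\vec{0}$; $f_{c_{1}}$ and $f_{a+\varepsilon}$ are both acyclic (the first by crossing the minimum level, the second because $X$ retracts onto it under $(PS)$ with no higher critical values); and $H_{*}(f_{a+\varepsilon},f_{c_{1}})\cong C_{*}(f,\vec{0})$---are all valid, and the long exact sequence of the pair then kills $C_{*}(f,\vec{0})$ entirely. Your closing observation is also correct: the Morse inequalities \eqref{e1} alone yield only $P_{\vec{0}}(t)=(1+t)Q(t)$, which does not contradict $C_{\ell}(f,\vec{0})\neq0$; it is the direct exact-sequence computation (exploiting that the inclusion-induced map $H_{0}(f_{c_{1}})\to H_{0}(f_{a+\varepsilon})$ is an isomorphism between two copies of $\mathcal{G}$) that extracts the sharper conclusion $C_{*}(f,\vec{0})=0$.
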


We note that according to \cite[Page 33]{MR1196690}, $C_{\ell}(
f,\vec{0}) \neq0$ for some $\ell\neq0$ implies that $\vec{0}$ is not
a local minimizer of $f$, as required in the original statement of
\cite[Theorem 2.1]{MR1828101}.

\section{Multiple solutions of elliptic systems}

In this section, as application of our abstract results on critical
groups, we
consider elliptic gradient systems of the form
\begin{equation}
\left\{
\begin{array}
[c]{ll}%
-\Delta u=F_{u}( x,u,v)\text{,}\qquad & \text{in }\Omega\text{,}\\
-\Delta v=F_{v}( x,u,v)\text{,} & \text{in }\Omega\text{,}\\
\phantom{-\Delta }u=v=0\text{,} & \text{on }\partial\Omega\text{,}%
\end{array}
\right.  \label{se1}%
\end{equation}
where $\Omega\subset\mathsf{R}^{N}$ is a bounded smooth domain,
$F\in C^{1}(
\Omega\times\mathsf{R}^{2},\mathsf{R}) $ satisfies the linear growth condition%
\begin{equation}
\left\vert \nabla F( x,\vec{z}) \right\vert \leq\Lambda\left\vert
\vec {z}\right\vert \text{,\qquad}( x,\vec{z})
\in\Omega\times\mathsf{R}^{2}
\label{se2}%
\end{equation}
for some constant $\Lambda>0$. Here, to simplify the notations we
denote $\vec{z}=( u,v) $. The gradient is taken with respect to
$\vec{z}$. Without lost of generality we may assume $F( x,\vec{0})
=0$. We also assume $\nabla F( x,\vec{0}) =\vec{0}$, so that
$\vec{z}=\vec{0}$ is a trivial solution of \eqref{se1}. Therefore,
we will focus on nontrivial solutions.

To apply variational methods, let $X$ be the Hilbert space $H_{0}^{1}%
(\Omega)\times H_{0}^{1}(\Omega)$ endowed with the inner product%
\[
\left\langle \vec{z},\vec{w}\right\rangle =\int_{\Omega}\nabla\vec{z}%
\cdot\nabla\vec{w}\,\mathrm{d}x
\]
and corresponding norm $\Vert\cdot\Vert$, here $\vec{z}=(u,v)$,
$\nabla\vec {z}=(\nabla u,\nabla v)$, the dot `$\cdot$' represents
the standard inner product in $\mathsf{R}^{2}$. Under the growth
condition \eqref{se2}, the
functional $\Phi:X\rightarrow\mathsf{R}$,%
\begin{equation}
\Phi(\vec{z})=\frac{1}{2}\int_{\Omega}\left\vert
\nabla\vec{z}\right\vert^2
\mathrm{d}x-\int_{\Omega}F(x,\vec{z})\mathrm{d}x \label{sfun}%
\end{equation}
is well defined and of class $C^{1}$. The critical points of $\Phi$
are solutions of the system \eqref{se1}.

Before state our assumptions on $F$ and our main results, let us
denote by $\mathcal{M}_{2}(\Omega)$ the set of those positive definite symmetric matrix
functions $A:\bar{\Omega
}\rightarrow M_{2\times2}(\mathsf{R})$ whose entries are continuous real functions on $\bar{\Omega}$.

For given $A\in\mathcal{M}_{2}(\Omega)$, there is an associated
weighted
eigenvalue problem%
\begin{equation}
\left\{
\begin{array}
[c]{ll}%
-\Delta\vec{z}=\lambda A(x)\vec{z} & \text{in }\Omega\text{,}\\
\vec{z}=\vec{0} & \text{on }\partial\Omega\text{.}%
\end{array}
\right.  \label{se3}%
\end{equation}
Since $A(x)$ is positive definite, using the spectral theory
of compact self-adjoint operator, it is well known that there is a complete list of distinct eigenvalues%
\[
0<\lambda_{1}(A)<\lambda_{2}(A)<\cdots
\]
such that $\lambda_{n}(A)\rightarrow+\infty$ as
$n\rightarrow\infty$.
Our multiplicity results depend on the interaction between the
nonlinearity $F$ and the eigenvalues of \eqref{se3}.

We also need the following concepts introduced by da Silva
\cite[Definition 1.4]{MR2651745}.

\begin{definition}
\label{sd1}Let $A,B\in\mathcal{M}_{2}( \Omega) $.

\begin{description}
\item[\rm(i)] We define $A\leq B$ if $A(x)\vec{z}\cdot\vec{z}\leq B(x)\vec{z}\cdot
\vec{z}$ for all $( x,\vec{z}) \in\Omega\times\mathsf{R}^{2}$.

\item[\rm(ii)] We define $A\preceq B$ if $A\leq B$ and $B-A$ is positive definite on
$\tilde{\Omega}\subset\Omega$ with $\left\vert
\tilde{\Omega}\right\vert >0$, where $\left\vert \cdot\right\vert $
stands for the Lebesgue measure.
\end{description}
\end{definition}

To state our assumptions on the nonlinearity $F(x,\vec{z})$, we
assume that there exists $A_{0}\in\mathcal{M}_{2}(\Omega)$ with
$\lambda_{m}(A_{0})=1$ for
some $m\in\mathsf{N}$, such that%
\begin{equation}
G(x,\vec{z}):= F(x,\vec{z})-\frac{1}{2}A_{0}(x)\vec{z}\cdot\vec{z}%
=o(\left\vert \vec{z}\right\vert ^{2})\text{,\qquad as }\left\vert
\vec
{z}\right\vert \rightarrow0 \label{se4}%
\end{equation}
uniformly for $x\in\Omega$. We then assume the following conditions
on $F$.

\begin{description}
\item[$( F_{0}^{\pm}) $] There exists some $\delta>0$ such that $\pm G( x,\vec{z})
>0$ for $0<\left\vert \vec{z}\right\vert \leq\delta$.

\item[$(F_{\infty}^{\pm})$] There exists \ $A_{\infty}\in\mathcal{M}%
_{2}(\Omega)$ with $\lambda_{k}(A_{\infty})=1$ for some
$k\in\mathsf{N}$, such
that%
\[
\lim_{\left\vert \vec{z}\right\vert \rightarrow\infty}\left(
F(x,\vec
{z})-\frac{1}{2}A_{\infty}(x)\vec{z}\cdot\vec{z}\right)  =\pm\infty\text{.}%
\]

\end{description}

\begin{remark}
\begin{description}
\item[\rm(i)] If $F\in C^{2}(\Omega\times\mathsf{R}^{2},\mathsf{R})$ verifies
$F(x,\vec{0})=0$, $\nabla F(x,\vec{0})=\vec{0}$, then by the Taylor
formular we see that \eqref{se4} holds with $A_{0}(x)$ being the
Hessian of $F(x,\cdot)$ at $\vec{z}=\vec{0}$.

\item[\rm(ii)] If $1\in( \lambda_{m}(A_{0}),\lambda_{m+1}(A_{0})) $ for some
$m\in\mathsf{N}$, namely the problem \eqref{se1} is non resonant at
the origin, then the condition $(F_{0}^{\pm})$ can be removed. The
same remark applies to $(F_{\infty}^{\pm})$ if \eqref{asl} holds.
\end{description}
\end{remark}

For the sake of simplicity, we denote $\lambda_{n}(A_{0})$ by $\lambda_{n}%
^{0}$, and $\lambda_{n}(A_{\infty})$ by $\lambda_{n}^{\infty}$. Set%
\[
d_{n}^{0}=\sum_{i=1}^{n}\dim\ker(-\Delta-\lambda_{i}^{0}A_{0})\text{,\qquad}%
d_{n}^{\infty}=\sum_{i=1}^{n}\dim\ker(-\Delta-\lambda_{i}^{\infty}A_{\infty
})\text{.}%
\]
Our main results are the following theorems

\begin{theorem}
\label{st1}Suppose that $F\in
C^{1}(\Omega\times\mathsf{R}^2,\mathsf{R})$ satisfies \eqref{se2} and
$(F_{\infty}^{+})$. Suppose moreover that there exists
$\beta\in\mathcal{M}_{2}(\Omega)$,
$\beta\preceq\lambda_{k+1}^{\infty
}A_{\infty}$ such that%
\begin{equation}
(\nabla F(x,\vec{z}_{1})-\nabla F(x,\vec{z}_{2}))\cdot(\vec{z}_{1}-\vec{z}%
_{2})\leq\beta(x)(\vec{z}_{1}-\vec{z}_{2})\cdot(\vec{z}_{1}-\vec{z}%
_{2})\text{,} \label{se5}%
\end{equation}
then the system \eqref{se1} has at least two nontrivial solutions in
each of the following cases:

\begin{description}
\item[\rm(i)] $( F_{0}^{+}) $ holds with $d_{m}^{0}\neq d_{k}^{\infty}$.

\item[\rm(ii)] $( F_{0}^{-}) $ holds with $d_{m-1}^{0}\neq d_{k}^{\infty}$.
\end{description}
\end{theorem}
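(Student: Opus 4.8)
The strategy is to use saddle point reduction along the splitting determined by $A_{\infty}$ to replace $\Phi$ by a finite--dimensional functional $\varphi$, and then to extract two nontrivial critical points of $\varphi$ from the Morse inequalities, lifting them back by $v\mapsto v+\psi(v)$. Let $X^{+}$ be the closure of the span of the eigenfunctions of \eqref{se3} with $A=A_{\infty}$ for $\lambda_{j}^{\infty}$, $j\ge k+1$, and $X^{-}$ the span of those for $j\le k$; then $X=X^{-}\oplus X^{+}$ and $\dim X^{-}=d_{k}^{\infty}=:n$. For $w_{1},w_{2}\in X^{+}$ condition \eqref{se5} yields $\langle\nabla\Phi(v+w_{1})-\nabla\Phi(v+w_{2}),w_{1}-w_{2}\rangle\ge\|w_{1}-w_{2}\|^{2}-\int_{\Omega}\beta(w_{1}-w_{2})\cdot(w_{1}-w_{2})\,\mathrm{d}x$, and since $\beta\preceq\lambda_{k+1}^{\infty}A_{\infty}$ the right--hand side is $\ge\kappa\|w_{1}-w_{2}\|^{2}$ for some $\kappa>0$ (the usual compactness argument on the unit sphere of $X^{+}$, unique continuation excluding the borderline case on $\ker(-\Delta-\lambda_{k+1}^{\infty}A_{\infty})$). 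Hence $(A_{+})$ holds, and saddle point reduction gives $\psi\in C(X^{-},X^{+})$ with $\psi(0)=0$ and $\varphi\in C^{1}(X^{-},\mathsf{R})$, $\varphi(v)=\min_{w\in X^{+}}\Phi(v+w)=\Phi(v+\psi(v))$, whose critical points correspond bijectively to those of $\Phi$ through the injective map $v\mapsto v+\psi(v)$, which sends $0$ to $\vec 0$ and nonzero points to nonzero points. We may assume all critical points of $\varphi$ are isolated, for otherwise \eqref{se1} already has infinitely many solutions.

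The key analytic point is that $\varphi$ is anticoercive on $X^{-}$. Since $\varphi(v)\le\Phi(v)$ there, it suffices to show $\Phi|_{X^{-}}$ is anticoercive. Writing $v=v_{-}+v_{0}$ with $v_{0}\in\ker(-\Delta-A_{\infty})$ (recall $\lambda_{k}^{\infty}=1$), the quadratic part of $\Phi|_{X^{-}}$ is $\tfrac12(\|v_{-}\|^{2}-\int A_{\infty}v_{-}\cdot v_{-})\le-c_{1}\|v_{-}\|^{2}$, whereas $(F_{\infty}^{+})$ together with the linear growth \eqref{se2} forces $\int_{\Omega}(F(x,v)-\tfrac12 A_{\infty}v\cdot v)\,\mathrm{d}x\to+\infty$ as $\|v\|\to\infty$ --- split $\Omega$ according to whether $|v(x)|$ exceeds a large threshold, using that $|\{|v_{0}|<R\}|\to0$ as $\|v_{0}\|\to\infty$ by unique continuation on the finite--dimensional space $\ker(-\Delta-A_{\infty})$, and the uniform lower bound on $F(x,\vec z)-\tfrac12 A_{\infty}\vec z\cdot\vec z$ furnished by $(F_{\infty}^{+})$. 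Thus $\Phi(v)\to-\infty$, so $\varphi$ is anticoercive on the finite--dimensional space $X^{-}$; hence $\varphi$ satisfies $(PS)$, is bounded above, attains a global maximum at some isolated $\bar v_{\infty}$ with $C_{q}(\varphi,\bar v_{\infty})=H_{q}(X^{-},X^{-}\setminus\{\bar v_{\infty}\})=\delta_{q,n}\mathcal G$, and $C_{q}(\varphi,\infty)=H_{q}(X^{-},\varphi_{\alpha})=\delta_{q,n}\mathcal G$ for $\alpha$ below all critical values, the set $\varphi_{\alpha}$ being homotopy equivalent to a large sphere $S^{n-1}$.

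At the origin, by \eqref{se4} we have $\Phi(\vec z)=\tfrac12(\|\vec z\|^{2}-\int A_{0}\vec z\cdot\vec z)-\int G(x,\vec z)$ with $\int G=o(\|\vec z\|^{2})$; decomposing $X$ into eigenspaces of \eqref{se3} for $A_{0}$, and using $\lambda_{m}(A_{0})=1$, the non--positive part of this quadratic form has dimension $d_{m}^{0}$ and its negative part $d_{m-1}^{0}$. In case (i), $(F_{0}^{+})$ makes $G\ge0$ near $\vec 0$, so $\vec 0$ behaves like a maximum along $\ker(-\Delta-A_{0})$ and a standard computation for $C^{1}$ functionals at resonant critical points (a local--linking/shifting argument) gives $C_{q}(\Phi,\vec 0)=\delta_{q,d_{m}^{0}}\mathcal G$; in case (ii), $(F_{0}^{-})$ gives $C_{q}(\Phi,\vec 0)=\delta_{q,d_{m-1}^{0}}\mathcal G$. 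Set $\ell_{0}:=d_{m}^{0}$ in case (i) and $\ell_{0}:=d_{m-1}^{0}$ in case (ii); the hypothesis is precisely $\ell_{0}\ne n$. By Theorem A(iii), $C_{q}(\varphi,0)=C_{q}(\Phi,\vec 0)=\delta_{q,\ell_{0}}\mathcal G$. Since $\ell_{0}\ne n$, $\bar v_{\infty}\ne0$ is a nontrivial critical point of $\varphi$; and if $\{0,\bar v_{\infty}\}$ were all of the critical set, the Morse inequalities \eqref{e1} would read $t^{\ell_{0}}+t^{n}=t^{n}+(1+t)Q(t)$ with $Q$ having non--negative integer coefficients, i.e.\ $(1+t)Q(t)=t^{\ell_{0}}$, which is impossible. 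Hence $\varphi$ has at least three critical points, so at least two nontrivial ones $\bar v_{1},\bar v_{2}$, and $\bar v_{1}+\psi(\bar v_{1})$, $\bar v_{2}+\psi(\bar v_{2})$ are two nontrivial solutions of \eqref{se1}. (Equivalently, one may apply Proposition \ref{ls} to $-\varphi$, which is coercive, bounded below and satisfies $(PS)$, once one knows $C_{n-\ell_{0}}(-\varphi,0)\ne0$ with $n-\ell_{0}\ne0$.)

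The principal obstacle is the anticoercivity step: $\Phi$ itself need not satisfy $(PS)$, since only the one--sided bound \eqref{se5} is assumed and not a genuine asymptotic linearity of $\nabla F$, and it is exactly the reduction to the finite--dimensional $\varphi$ that lets the Landesman--Lazer--type condition $(F_{\infty}^{+})$ deliver anticoercivity and hence $(PS)$; making this rigorous requires the direction--uniform lower bound for $\int_{\Omega}(F(x,v)-\tfrac12 A_{\infty}v\cdot v)\,\mathrm{d}x$, where unique continuation and the precise form of $(F_{\infty}^{+})$ come in. The other delicate ingredient is the exact critical group $C_{q}(\Phi,\vec 0)$ of the merely $C^{1}$ resonant functional; together with Theorem A(iii) it is the shift between the $A_{0}$-- and $A_{\infty}$--eigenvalue counts that converts the abstract results into the arithmetic conditions $d_{m}^{0}\ne d_{k}^{\infty}$ and $d_{m-1}^{0}\ne d_{k}^{\infty}$.
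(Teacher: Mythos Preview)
Your overall architecture is precisely the one the paper intends for Theorem~\ref{st1} (which it does not write out, proving only the harder Theorem~\ref{st2}): verify $(A_{+})$ with $X^{-}=\bigoplus_{j\le k}\ker(-\Delta-\lambda_{j}^{\infty}A_{\infty})$ via Proposition~\ref{sp3} (the dual case $\beta\preceq\lambda_{k+1}^{\infty}A_{\infty}$), reduce to a $C^{1}$ functional $\varphi$ on the finite-dimensional $X^{-}$, show $\varphi$ is anticoercive from $(F_{\infty}^{+})$ (the finite-dimensional analogue of Lemmas~\ref{sl4}--\ref{sll3}), transfer $C_{\ast}(\Phi,\vec 0)$ to $C_{\ast}(\varphi,0)$ by Theorem~A(iii), and conclude with a three-critical-points argument.

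There is, however, a real gap in your treatment of the critical groups at the origin. Local linking for a $C^{1}$ functional, which is exactly what the paper proves in Lemma~\ref{sl2} via \cite{MR1110119}, gives only $C_{\ell_{0}}(\Phi,\vec 0)\neq0$; it does \emph{not} give the full identity $C_{q}(\Phi,\vec 0)=\delta_{q,\ell_{0}}\mathcal G$ that you invoke. A ``shifting'' computation of that strength needs $\Phi\in C^{2}$, which is not assumed. Your Morse-inequality contradiction collapses without the full Kronecker delta: from only $C_{\ell_{0}}(\varphi,0)\neq0$ and $C_{q}(\varphi,\bar v_{\infty})=C_{q}(\varphi,\infty)=\delta_{q,n}\mathcal G$ one gets $\sum_{q}\operatorname{rank}C_{q}(\varphi,0)\,t^{q}=(1+t)Q(t)$, and this is \emph{not} impossible --- indeed Poincar\'e--Hopf forces the left-hand side to vanish at $t=-1$, so $(1+t)$ always divides it.

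Your parenthetical alternative is the correct route and matches the paper's template: apply Proposition~\ref{ls} to $-\varphi$, which is coercive and bounded below on $X^{-}\cong\mathsf R^{n}$. But the passage from $C_{\ell_{0}}(\varphi,0)\neq0$ to $C_{n-\ell_{0}}(-\varphi,0)\neq0$ must be justified; this is the finite-dimensional Poincar\'e (Alexander--Lefschetz) duality $C_{q}(\varphi,0)\cong C_{n-q}(-\varphi,0)$ for isolated critical points with field coefficients, and it should be stated explicitly, since it is the one place where $\dim X^{-}<\infty$ is used beyond compactness. With this in hand, $n-\ell_{0}\neq0$ is exactly the hypothesis $\ell_{0}\neq d_{k}^{\infty}$, and Proposition~\ref{ls} yields three critical points of $\varphi$, hence two nontrivial solutions of \eqref{se1}.
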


\begin{theorem}
\label{st2}Suppose that $F\in
C^{1}(\Omega\times\mathsf{R}^2,\mathsf{R})$ satisfies \eqref{se2} and
$(F_{\infty}^{-})$. Suppose moreover that there exists
$\beta\in\mathcal{M}_{2}(\Omega)$,
$\beta\succeq\lambda_{k-1}^{\infty
}A_{\infty}$ such that%
\begin{equation}
(\nabla F(x,\vec{z}_{1})-\nabla F(x,\vec{z}_{2}))\cdot(\vec{z}_{1}-\vec{z}%
_{2})\geq\beta(x)(\vec{z}_{1}-\vec{z}_{2})\cdot(\vec{z}_{1}-\vec{z}%
_{2})\text{,} \label{se6}%
\end{equation}
then the system \eqref{se1} has at least two nontrivial solutions in
each of the following cases:

\begin{description}
\item[\rm(i)] $( F_{0}^{+}) $ holds with $d_{m}^{0}\neq d_{k-1}^{\infty}$.

\item[\rm(ii)] $( F_{0}^{-}) $ holds with $d_{m-1}^{0}\neq d_{k-1}^{\infty}$.
\end{description}
\end{theorem}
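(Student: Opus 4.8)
I prove Theorem~\ref{st2}; Theorem~\ref{st1} is entirely parallel, with case $(A_{+})$, $\dim X^{-}<\infty$ and Theorem~A~(iii) replacing case $(A_{-})$, $\dim X^{+}<\infty$ and Theorem~\ref{t1}. The plan has four steps: (a)~one global saddle point reduction of $\Phi$ adapted to $A_{\infty}$; (b)~the reduced functional $\varphi$ is bounded below and satisfies $(PS)$, which is where the failure of $(PS)$ for $\Phi$ is circumvented; (c)~compute $C_{\ast}(\Phi,\vec{0})$ from the data $(A_{0},G)$ at the origin and transfer it to $\varphi$ by Theorem~\ref{t1}; (d)~apply Proposition~\ref{ls} to $\varphi$. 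For (a), take $X^{+}$ to be the span of the eigenfunctions of $-\Delta\vec{z}=\lambda A_{\infty}(x)\vec{z}$ with eigenvalue $<\lambda_{k}^{\infty}=1$, so $\mu:=\dim X^{+}=d_{k-1}^{\infty}$, and $X^{-}=(X^{+})^{\perp}$. From $\beta\succeq\lambda_{k-1}^{\infty}A_{\infty}$ one obtains $\int_{\Omega}\beta w\cdot w\geq(1+\kappa)\|w\|^{2}$ for $w\in X^{+}$ and some $\kappa>0$ — the gap coming from the strict positivity of $\beta-\lambda_{k-1}^{\infty}A_{\infty}$ on a set of positive measure and unique continuation for the finite dimensional space $X^{+}$ — and substituting this and \eqref{se6} into $-\langle\nabla\Phi(v+w_{1})-\nabla\Phi(v+w_{2}),w_{1}-w_{2}\rangle$ gives $(A_{-})$ for $\Phi$. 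Saddle point reduction then yields $\psi\in C(X^{-},X^{+})$ and $\varphi\in C^{1}(X^{-},\mathsf R)$ with $\varphi(v)=\max_{w\in X^{+}}\Phi(v+w)$; since $\nabla F(x,\vec{0})=\vec{0}$ we have $\psi(\vec{0})=\vec{0}$, so $\vec{0}\in X^{-}$ is a critical point of $\varphi$, and by Proposition~\ref{p2} (with $\nabla\Phi$ bounded, by \eqref{se2}) $\nabla\varphi=\vec{1}_{(X^{-})}-Q$ with $Q$ compact.

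For (b), write $F=\frac{1}{2}A_{\infty}\vec{z}\cdot\vec{z}+G_{\infty}$, where $G_{\infty}\leq M$ for a constant $M$ by $(F_{\infty}^{-})$, and set $Q^{-}(v):=\frac{1}{2}\|v\|^{2}-\frac{1}{2}\int_{\Omega}A_{\infty}v\cdot v$, which is $\geq0$ on $X^{-}$. Then $\varphi(v)\geq\Phi(v)=Q^{-}(v)-\int_{\Omega}G_{\infty}(x,v)\geq-M|\Omega|$, so $\varphi$ is bounded below. If $\{v_{n}\}\subset X^{-}$ has $|\varphi(v_{n})|\leq C$, $\nabla\varphi(v_{n})\to\vec{0}$ but $\|v_{n}\|\to\infty$, then from $v_{n}=Q(v_{n})+o(1)$ and the compactness and linear growth of $Q$ we get $v_{n}/\|v_{n}\|\to\hat v$ along a subsequence with $\|\hat v\|=1$; hence $|v_{n}(x)|\to\infty$ for a.e.\ $x$ on the positive measure set $\{\hat v\neq0\}$, so $M-G_{\infty}(x,v_{n}(x))\to+\infty$ there, and as this integrand is nonnegative, Fatou's lemma forces $-\int_{\Omega}G_{\infty}(x,v_{n})\to+\infty$; thus $\varphi(v_{n})\geq\Phi(v_{n})=Q^{-}(v_{n})-\int_{\Omega}G_{\infty}(x,v_{n})\to+\infty$, a contradiction. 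So $(PS)$ sequences for $\varphi$ are bounded, and boundedness plus $\nabla\varphi=\vec{1}_{(X^{-})}-Q$ with $Q$ compact yields a convergent subsequence.

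For (c) and (d), we may assume $\Phi$ has only finitely many critical points (otherwise at least two nontrivial solutions already exist); then $\vec{0}$ is an isolated critical point of $\Phi$ and of $\varphi$, and $\varphi(\vec{0})=\Phi(\vec{0})=0$ is an isolated critical value of $\varphi$. Near $\vec{0}$, \eqref{se4} gives $\Phi(\vec{z})=\frac{1}{2}\langle(\vec{1}-L_{0})\vec{z},\vec{z}\rangle-\int_{\Omega}G(x,\vec{z})$ with $L_{0}$ the compact self-adjoint operator attached to $A_{0}$ and $\int_{\Omega}G(x,\vec{z})=o(\|\vec{z}\|^{2})$; since $\lambda_{m}^{0}=1$, the form $\vec{1}-L_{0}$ has Morse index $d_{m-1}^{0}$ and nullity $d_{m}^{0}-d_{m-1}^{0}$. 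Under $(F_{0}^{+})$ the sign condition makes $\vec{0}$ a strict local maximum of $\Phi$ restricted to $\ker(\vec{1}-L_{0})$, so the shifting theorem (in the $C^{1}$ form standard for resonant problems, cf.\ \cite{MR2651745,MR2488059}) gives $C_{q}(\Phi,\vec{0})=\delta_{q,d_{m}^{0}}\mathcal G$; under $(F_{0}^{-})$ it is a strict local minimum and $C_{q}(\Phi,\vec{0})=\delta_{q,d_{m-1}^{0}}\mathcal G$ (that the ensuing index shift is nonnegative follows by comparing $A_{0}$ with $A_{\infty}$ through \eqref{se6}). Now Theorem~\ref{t1} applies with $\bar v=\vec{0}$ and gives $C_{q}(\Phi,\vec{0})\cong C_{q-\mu}(\varphi,\vec{0})$, hence $C_{q}(\varphi,\vec{0})=\delta_{q,\,d_{m}^{0}-d_{k-1}^{\infty}}\mathcal G$ in case~(i) and $C_{q}(\varphi,\vec{0})=\delta_{q,\,d_{m-1}^{0}-d_{k-1}^{\infty}}\mathcal G$ in case~(ii). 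In each case the distinguished degree $\ell$ is nonzero by hypothesis, so $C_{\ell}(\varphi,\vec{0})\neq0$ with $\ell\neq0$; since $\varphi$ is bounded below and satisfies $(PS)$, Proposition~\ref{ls} produces at least three critical points of $\varphi$, and discarding $\vec{0}$ leaves $\bar v_{1},\bar v_{2}\in X^{-}\setminus\{\vec{0}\}$ with $\nabla\varphi(\bar v_{i})=\vec{0}$. Then $\bar v_{i}+\psi(\bar v_{i})$ are critical points of $\Phi$, distinct from $\vec{0}=\vec{0}+\psi(\vec{0})$ since $v\mapsto v+\psi(v)$ is injective, giving the two nontrivial solutions of \eqref{se1}.

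The step I expect to be hardest is (c): resonance at the origin ($\lambda_{m}(A_{0})=1$) makes $\vec{0}$ degenerate, and one has only $G=o(|\vec{z}|^{2})$ together with a sign — not $\nabla G=o(|\vec{z}|)$ — so the splitting/shifting computation of $C_{\ast}(\Phi,\vec{0})$ must be carried out under mere $C^{1}$ regularity, with $(F_{0}^{\pm})$ the sole extra input. The conceptually decisive point, though, is that $\Phi$ generally fails $(PS)$, so Morse theory cannot be applied to $\Phi$ directly; Theorem~\ref{t1} is precisely what lets the whole argument be run on $\varphi$, for which $(PS)$ holds by step (b).
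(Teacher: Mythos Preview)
Your overall architecture matches the paper's exactly: saddle point reduction with respect to the decomposition \eqref{red}, coercivity and $(PS)$ for the reduced functional via $(F_{\infty}^{-})$, a critical-group computation at $\vec{0}$, and then Proposition~\ref{ls}. Two points deserve comment.

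The substantive issue is step~(c). You invoke a ``shifting theorem in the $C^{1}$ form'' to obtain the full computation $C_{q}(\Phi,\vec{0})=\delta_{q,\ell}\mathcal{G}$, citing \cite{MR2651745,MR2488059}. But \cite{MR2651745} assumes $F\in C^{2}$ precisely to run a Morse-index argument (the paper flags this explicitly), and \cite{MR2488059} does not use a shifting theorem at all. There is no off-the-shelf $C^{1}$ splitting lemma that applies with only \eqref{se4} and the sign condition $(F_{0}^{\pm})$; the local saddle-point reduction onto $\ker(\vec{1}-L_{0})$ that underlies shifting needs second-order control of $G$, which you do not have. The paper sidesteps this by proving only the weaker statement $C_{\ell}(\Phi,\vec{0})\neq0$ (Lemma~\ref{sl2}), via a \emph{local linking} of $\Phi$ with respect to $V_{-}\oplus(V_{0}\oplus V_{+})$ and J.Q.~Liu's result \cite{MR1110119}. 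That argument is purely $C^{1}$, uses $(F_{0}^{\pm})$ only through the sign of $G$ on the finite-dimensional kernel block, and gives exactly what Proposition~\ref{ls} needs. Your parenthetical about the index shift being nonnegative is unnecessary: once $C_{\ell}(\Phi,\vec{0})\neq0$ and Theorem~\ref{t1} gives $C_{\ell}(\Phi,\vec{0})\cong C_{\ell-\mu}(\varphi,\vec{0})$, nonnegativity of $\ell-\mu$ is forced.

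A smaller point in step~(b): the passage ``from $v_{n}=Q(v_{n})+o(1)$ and the compactness and linear growth of $Q$ we get $v_{n}/\|v_{n}\|\to\hat v$ with $\|\hat v\|=1$'' is not immediate, because $Q$ is nonlinear and $\{v_{n}\}$ is unbounded, so compactness of $Q$ alone does not give precompactness of $Q(v_{n})/\|v_{n}\|$. It can be made to work by writing $Q(v)=P_{-}(-\Delta)^{-1}\nabla F(\cdot,v+\psi(v))$ and using that $(-\Delta)^{-1}:L^{2}\to X$ is compact together with the $L^{2}$-bound from \eqref{se2}. The paper avoids this altogether (Lemmas~\ref{sl4}--\ref{sll3}): it shows directly that $\Phi_{1}:=\Phi|_{X^{-}}$ is coercive using only level-set information ($\Phi_{1}(v_{n})\leq c$) and the $L^{2}$-compact embedding, then deduces coercivity of $\varphi$ from $\varphi\geq\Phi_{1}$; this is shorter and does not touch $\nabla\varphi$ at all (cf.\ Remark~\ref{rek1}).
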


Obviously, if in addition to \eqref{se4} we have
\begin{equation}
\left\vert \nabla F(x,\vec{z})-A_{\infty}(x)\vec{z}\right\vert
=o(\left\vert \vec{z}\right\vert )\text{,\qquad as }\left\vert
\vec{z}\right\vert \rightarrow\infty\text{,}\label{asl}
\end{equation}
then \eqref{se2} holds. In this case we say that the problem
\eqref{se1} is asymptotically linear at infinity. Since the pioneer
work of Amann and Zehnder \cite{MR600524}, asymptotically linear
problems for a single equation have captured great interest. We
referr to \cite{MR1730881,MR1662078} and references therein for some
interesting results.

The asymptotically linear elliptic systems have captured some
attentions in recent years. In \cite{MR1813819,MR1697052}, the
authors considered the case that $A_{0}=A_{\infty}$ are constant
matrices. In \cite{MR2227915}, Furtado and Paiva studied the case
that $A_{0}$ and $A_{\infty}$ are variable matrices. Under some
conditions that ensure the Euler-Lagrange functional $\Phi$
satisfying the Ceremi type compactness condition, they obtained a
nontrivial solution.

In \cite{MR2651745}, also for the case that $A_{0}$ and $A_{\infty}$
are variable matrices, Silva obtained two nontrivial solutions for
the problem by applying Morse index type argument to $\Phi$. Thus it
is essential to require $F\in C^{2}$ so that $\Phi$ is also of class
$C^{2}$. Similar to \cite{MR1662078}, the reduction conditions
\eqref{se5} and \eqref{se6} are used to control the Morse index.
Finally, he also required some conditions to guarantee Ceremi type
compactness for $\Phi$.

Recently, Furtado and Paiva obtained a multiplicity result
\cite[Theorem
1.1]{MR2685145} under \eqref{se5} and $(F_\infty^+)$. But they only considered the case that $1\in( \lambda_{m}%
(A_{0}),\lambda_{m+1}(A_{0})) $ for some $m\in\mathsf{N}$, and they
also required $F\in C^{2}$. Therefore, our Theorem \ref{st1} is an
improvement.

Under our assumptions the functional $\Phi$ may not satisfy the $(
PS) $ condition. To overcome this difficulty, as in
\cite{MR2488059,MR1781225}, we will perform the saddle point
reduction and turn to consider the reduced functional $\varphi$. It
turns out that the case of Theorem \ref{st2} is more difficult,
because the reduced functional $\varphi$ is defined on an infinite
dimentional subspace.

Although our proof of Theorem \ref{st2} is based on some idea from
\cite{MR2488059}, there are some real differences. The most
significant one is that if $A_{0}$ and $A_{\infty}$ are different
matrix functions, the local linking approach used in
\cite{MR2488059} does not work any more, see Remark \ref{rek2} for
details. Hence we must use critical groups and our abstract result (Theorem \ref{t1}) is
crucial.

\section{Proofs of Theorems \ref{st1} and \ref{st2}}

Let $\Phi$ be the functional introduced in \eqref{sfun}. To prove
our theorems it suffices to show that $\Phi$ has two nonzero
critical points.

Recall that for our Hilbert space $X$ and $p\in\left[
2,2^{\ast}\right]  $,
by Sobolev inequality there is a constant $S_{p}$ such that%
\begin{equation}
\left\vert \vec{z}\right\vert _{p}:=\left(  \int_{\Omega}\left\vert
\vec {z}\right\vert ^{p}\mathrm{d}x\right)  ^{1/p}\leq
S_{p}\left\Vert \vec{z}\right\Vert
\text{.}\label{sSob}%
\end{equation}
That is, the embedding $X\hookrightarrow L^{p}(\Omega)\times
L^{p}(\Omega)$ is continuous. Moreover, using the Rellich-Kondrachov
theorem we see that the embedding is also compact if
$p\in\lbrack2,2^{\ast})$.

\begin{lemma}
\label{sl2}

\begin{description}
\item[\rm(i)] If $( F_{0}^{+}) $ holds, then $C_{d_{m}^{0}}( \Phi,\vec{0}) \neq0$.

\item[\rm(ii)] If $( F_{0}^{-}) $ holds, then $C_{d_{m-1}^{0}}( \Phi,\vec{0}) \neq0$.
\end{description}
\end{lemma}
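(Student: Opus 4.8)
\textbf{Proof proposal for Lemma \ref{sl2}.}

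The plan is to compute the critical group of $\Phi$ at $\vec{0}$ by performing a saddle point reduction adapted to the behaviour of $\Phi$ near the origin, and then invoking Theorem \ref{t1} (case $(A_-)$) together with a standard critical-group computation for the reduced functional at a local extremum. First I would set up the splitting of $X$ determined by the quadratic form $\tfrac12\|\vec z\|^2-\tfrac12\int_\Omega A_0(x)\vec z\cdot\vec z$; concretely, decompose $X=X^-\oplus X^+$ where $X^-$ is spanned by the eigenfunctions of $-\Delta - \lambda A_0$ with $\lambda\le\lambda_m^0=1$ (so $\dim X^-=d_m^0$ in case (i), or $X^-$ corresponds to $\lambda<1$ with $\dim X^-=d_{m-1}^0$ in case (ii)) and $X^+$ its orthogonal complement. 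The linear growth bound \eqref{se2} gives a global Lipschitz control on $\nabla F$, so on $X^+$ the functional $\Phi$ is uniformly convex in the sense required by $(A_+)$ or $(A_-)$ — more precisely, because $1<\lambda_{m+1}^0$ on $X^+$ and the nonlinear remainder $G$ is higher order, one gets the monotonicity estimate needed. Here one has to be slightly careful: the reduction hypothesis $(A_{\pm})$ is a global condition, whereas near $\vec0$ the relevant splitting is the spectral one at $A_0$; the clean way is to work on a functional that agrees with $\Phi$ near $\vec0$, i.e. truncate $G$ outside $|\vec z|\le\delta$, or simply observe that the critical group is local and one may replace $\Phi$ by $\tfrac12\|\vec z\|^2-\tfrac12\int A_0\vec z\cdot\vec z-\int \chi(|\vec z|)G(x,\vec z)$ with a cutoff $\chi$, which does satisfy the required global reduction condition.

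Next, with the reduction in place, write $\varphi$ for the reduced functional on the finite-dimensional (in the relevant local model) or appropriate subspace, and let $\bar v=\vec0$, which is the critical point of $\varphi$ corresponding to the critical point $\vec0=\bar v+\psi(\bar v)$ of $\Phi$ (note $\psi(\vec0)=\vec0$ since $\nabla F(x,\vec0)=\vec0$). By Theorem \ref{t1} (or Theorem A (iii), depending on which case of $(A_\pm)$ the truncated functional falls into), $C_q(\Phi,\vec0)\cong C_{q-\mu}(\varphi,\vec0)$ with $\mu=\dim X^+$ in the $(A_-)$ case, or $C_q(\Phi,\vec0)\cong C_q(\varphi,\vec0)$ in the $(A_+)$ case. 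The point of condition $(F_0^{\pm})$ is precisely to pin down the behaviour of $\varphi$ near $\vec0$: since $\varphi(v)=\max_{w\in X^+}\Phi(v+w)$ (resp. $\min$), and since $\pm G>0$ for $0<|\vec z|\le\delta$ shows that along $X^-$ the quadratic part is being pushed down (resp. up), I would show that $\bar v=\vec0$ is a strict local maximum (resp. minimum) of $\varphi$. Once $\vec0$ is an isolated local maximum of $\varphi$ on a space of dimension $\dim X^-$, the standard fact (Chang, or \cite{MR1196690}) gives $C_q(\varphi,\vec0)=\delta_{q,\dim X^-}\mathcal G$; a local minimum gives $C_q(\varphi,\vec0)=\delta_{q,0}\mathcal G$.

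Assembling the two steps: in case (i), with $(F_0^+)$, I expect the relevant splitting to put the $\lambda=1$ eigenspace into $X^-$, so $\dim X^-=d_m^0$, the reduction is of type $(A_-)$ with $X^+$ the positive space, and $\vec0$ is a local max of $\varphi$ on $X^-$; then $C_q(\varphi,\vec0)=\delta_{q,\dim X^-}\mathcal G=\delta_{q,d_m^0}\mathcal G$ — wait, one must track the index shift: if the reduction is $(A_-)$ then $C_q(\Phi,\vec0)\cong C_{q-\mu}(\varphi,\vec0)$; the cleaner bookkeeping is to choose the reduction so that $X^+$ is exactly the space where $\Phi$ is concave near $\vec0$ (i.e. $\lambda<1$ directions together with $\lambda=1$ or not, according to the sign), making $\varphi$ have a local minimum and the shift $\mu$ equal to the Morse index so that $C_{d_m^0}(\Phi,\vec0)\ne0$. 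In case (ii), with $(F_0^-)$, the $\lambda=1$ eigenspace contributes with the opposite sign and one lands at $C_{d_{m-1}^0}(\Phi,\vec0)\ne0$. The main obstacle I anticipate is exactly this: correctly deciding, in the resonant situation $\lambda_m^0=1$, on which side of the splitting the kernel $\ker(-\Delta-A_0)$ must go, and matching the resulting index shift from Theorem \ref{t1} with the claimed degrees $d_m^0$ and $d_{m-1}^0$; the sign of $G$ from $(F_0^{\pm})$ is what resolves this, and verifying rigorously that $\vec0$ is a strict local extremum of the reduced functional (not merely a critical point) — using $(F_0^\pm)$ and the definition \eqref{e} of $\varphi$ as a max/min over $X^+$ — is where the real work lies.
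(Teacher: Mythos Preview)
The paper's proof is quite different: it does not use saddle point reduction or Theorem \ref{t1} at all. Instead it shows directly that $\Phi$ has a local linking at $\vec0$ with respect to $X=V_-\oplus(V_0\oplus V_+)$ (where $V_-,V_0,V_+$ are the negative, null, and positive eigenspaces for $-\Delta-A_0$), and then invokes J.Q.~Liu's theorem \cite{MR1110119}. The nontrivial estimate is $\Phi(\vec z)>0$ for small nonzero $\vec z\in V_0\oplus V_+$; since $V_+$ is infinite-dimensional there is no pointwise control on $\vec z$, and the paper handles this by writing $\vec z=\vec v+\vec w\in V_0\oplus V_+$ and splitting $\Omega=\{|\vec w|\le\delta/2\}\cup\{|\vec w|>\delta/2\}$.

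Your reduction approach has a real obstacle. The kernel $V_0$ cannot sit inside $X^+$ under either $(A_+)$ or $(A_-)$: the quadratic part of $\Phi$ vanishes identically on $V_0$, so no cutoff of $G$ produces the uniform monotonicity constant $\kappa>0$ there. Hence the only legitimate reductions near $\vec0$ are $(A_+)$ with $X^+=V_+$ (so $\varphi$ lives on the finite-dimensional $V_-\oplus V_0$ and Theorem A (iii) gives no index shift) or $(A_-)$ with $X^+=V_-$ (so $\varphi$ lives on the infinite-dimensional $V_0\oplus V_+$ and Theorem \ref{t1} shifts by $d_{m-1}^0$). Your labelling is reversed, by the way: convexity on $X^+$ is $(A_+)$, not $(A_-)$. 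In case (i) the first option works cleanly: $V_-\oplus V_0$ is finite-dimensional, pointwise control plus $(F_0^+)$ give $\varphi(v)\le\Phi(v)<0$ for small $v\ne0$, so $\vec0$ is a strict local maximum and $C_{d_m^0}\ne0$. But in case (ii) neither option sidesteps the hard work: the $(A_-)$ option requires showing $\vec0$ is a strict local minimum of $\varphi$ on the infinite-dimensional $V_0\oplus V_+$, and since $\varphi\ge\Phi$ this amounts to $\Phi>0$ on a punctured ball in $V_0\oplus V_+$ --- exactly the paper's local-linking estimate with the $\Omega_1/\Omega_2$ trick; the $(A_+)$ option leaves $\vec0$ a genuine saddle of $\varphi$ on $V_-\oplus V_0$, not an extremum, so you would still need a local-linking computation for $\varphi$. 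In short, your plan is viable for (i) but does not avoid the paper's key estimate in (ii), and it adds the overhead of verifying the $(PS)$ and isolated-critical-value hypotheses of Theorem \ref{t1} for the truncated functional.
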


\begin{proof}
We only prove Case (ii). Set $V_{0}=\ker( -\Delta-\lambda_{m}^{0}A_{0}) $,%
\[
V_{-}=\bigoplus_{i=1}^{m-1}\ker( -\Delta-\lambda_{i}^{0}A_{0})
\text{,\qquad }V_{+}=\overline{\bigoplus_{i=m+1}^{\infty}\ker(
-\Delta-\lambda_{i}^{0}A_{0})
}\text{.}%
\]
Then $\dim V_{-}=d_{m-1}^{0}$. We will show that $\Phi$ has a local
linking with respect to the decomposition $X=V_{-}\oplus(
V_{0}\oplus V_{+})
$.\ Namely, there exists $\rho>0$ such that%
\begin{equation}
\left\{
\begin{array}
[c]{l}%
\Phi( \vec{z}) \leq0\text{\qquad for }\vec{z}\in V_{-}\text{,
}\left\Vert
\vec{z}\right\Vert \leq\rho\text{,}\\
\Phi( \vec{z}) >0\text{\qquad for }\vec{z}\in V_{0}\oplus
V_{+}\text{,
}0<\left\Vert \vec{z}\right\Vert \leq\rho\text{.}%
\end{array}
\right.  \label{slo}%
\end{equation}
The desired result will then follow from \cite[Theorem
2.1]{MR1110119}. To prove \eqref{slo}, we argue as in \cite[Page
24]{MR1312028}.

Since $\lambda_{m}^{0}=1$ is an isolated eigenvalue, it is well know
that
there exists positive number $\kappa>0$ such that%
\[
\pm\frac{1}{2}\int_{\Omega}\left(  \left\vert
\nabla\vec{z}\right\vert ^{2}-A_{0}(x)\vec{z}\cdot\vec{z}\right)
\mathrm{d}x\geq\kappa\left\Vert \vec
{z}\right\Vert ^{2}\text{,\qquad}z\in V_{\pm}\text{.}%
\]
Using \eqref{se2}, \eqref{se4}, there exists $C>0$ such that%
\begin{equation}
\left\vert G(x,\vec{z})\right\vert
\leq\frac{\kappa}{8S_{2}^{2}}\left\vert
\vec{z}\right\vert ^{2}+C_{1}\left\vert \vec{z}\right\vert ^{2^{\ast}%
}\text{,\qquad}(x,\vec{z})\in\Omega\times\mathsf{R}^{2}\text{.}\label{sleq}%
\end{equation}
For $\vec{z}\in V_{-}$, using \eqref{sleq} and \eqref{sSob}, we obtain%
\begin{align}
\Phi(\vec{z}) &  =\frac{1}{2}\int_{\Omega}\left(  \left\vert
\nabla\vec {z}\right\vert ^{2}-A_{0}(x)\vec{z}\cdot\vec{z}\right)
\mathrm{d}x-\int_{\Omega
}G(x,\vec{z})\mathrm{d}x\nonumber\\
&  \leq-\kappa\left\Vert \vec{z}\right\Vert ^{2}+\frac{\kappa}{8S_{2}^{2}%
}\left\vert \vec{z}\right\vert _{2}^{2}+C_{1}\left\vert
\vec{z}\right\vert
_{2^{\ast}}^{2^{\ast}}\nonumber\\
&  \leq-\frac{\kappa}{2}\left\Vert \vec{z}\right\Vert
^{2}+C_{3}\left\Vert \vec{z}\right\Vert
^{2^{\ast}}\leq0\text{,}\label{sv1}%
\end{align}
provided $\Vert
\vec{z}\Vert\le\rho_1=\left(2^{-1}C_3^{-1}\kappa\right)^{1/(2^*-2)}$.

On the other hand, since $\dim V_{0}<\infty$, there exists $C_{2}>0$ such that%
\[
\left\vert \vec{v}\right\vert _{\infty}\leq C_{2}\left\Vert
\vec{v}\right\Vert
\text{,\qquad for }\vec{v}\in V_{0}\text{.}%
\]
For $\vec{z}\in V_{0}\oplus V_{+}$ with $\left\Vert
\vec{z}\right\Vert \leq2^{-1}C_{2}^{-1}\delta$, we may write
$\vec{z}=\vec{v}+\vec{w}$, where
$\vec{v}\in V_{0}$, $\vec{w}\in V_{+}$. Set%
\[
\Omega_{1}=\left\{  x\in\Omega\left\vert \,\left\vert
\vec{w}(x)\right\vert \leq\frac{\delta}{2}\right.  \right\}
\text{,\qquad}\Omega_{2}=\Omega
\backslash\Omega_{1}\text{.}%
\]
For $x\in\Omega_{2}$, we have%
\begin{align*}
\left\vert \vec{z}(x)\right\vert  &  \leq\left\vert
\vec{v}(x)\right\vert +\left\vert \vec{w}(x)\right\vert
\leq\left\vert \vec{v}\right\vert _{\infty
}+\left\vert \vec{w}(x)\right\vert \\
&  \leq C_{2}\left\Vert \vec{v}\right\Vert +\left\vert
\vec{w}(x)\right\vert
\\
&  \leq C_{2}\left\Vert \vec{z}\right\Vert +\left\vert
\vec{w}(x)\right\vert \leq\frac{\delta}{2}+\left\vert
\vec{w}(x)\right\vert \leq2\left\vert \vec
{w}(x)\right\vert \text{.}%
\end{align*}
By \eqref{sleq}, we see that for $x\in\Omega_{2}$,%
\[
G(x,\vec{z})\leq\frac{\kappa}{8S_{2}^{2}}\left\vert
\vec{z}\right\vert
^{2}+C_{1}\left\vert \vec{z}\right\vert ^{2^{\ast}}\leq\frac{\kappa}%
{2S_{2}^{2}}\left\vert \vec{w}\right\vert
^{2}+C_{1}^{\prime}\left\vert
\vec{w}\right\vert ^{2^{\ast}}\text{.}%
\]
This is also true for $x\in\Omega_{1}$, because in this case%
\[
\left\vert \vec{z}(x)\right\vert \leq\left\vert
\vec{v}(x)\right\vert +\left\vert \vec{w}(x)\right\vert
\leq\left\vert \vec{v}\right\vert _{\infty
}+\frac{\delta}{2}\leq C_{2}\left\Vert \vec{v}\right\Vert +\frac{\delta}%
{2}\leq C_{2}\left\Vert \vec{z}\right\Vert +\frac{\delta}{2}\leq\delta\text{,}%
\]
hence $G(x,\vec{z})\leq0$ by our assumption $(F_{0}^{-})$. Therefore
using \eqref{sSob} we deduce
\begin{align}
\Phi(\vec{z}) &  =\frac{1}{2}\int_{\Omega}\left(  \left\vert
\nabla\vec {w}\right\vert ^{2}-A_{0}(x)\vec{w}\cdot\vec{w}\right)
\mathrm{d}x-\int_{\Omega
}G(x,\vec{z})\mathrm{d}x\nonumber\\
&  \geq\kappa\left\Vert \vec{w}\right\Vert ^{2}-\frac{\kappa}{2S_{2}^{2}%
}\left\vert \vec{w}\right\vert _{2}^{2}-C_{1}^{\prime}\left\vert
\vec
{w}\right\vert ^{2^{\ast}}\nonumber\\
&  \geq\frac{\kappa}{2}\left\Vert \vec{w}\right\Vert
^{2}-C_{4}\left\Vert \vec{w}\right\Vert ^{2^{\ast}}\text{,\qquad}\vec{z}=\vec{v}%
+\vec{w}\in V_{0}\oplus V_{+}\text{.}\label{sv2}%
\end{align}

Now, let $\vec{z}\in V_{0}\oplus V_{+}$ be such that
\[
0<\Vert
\vec{z}\Vert\le\rho_2=\min\left\{\frac{\delta}{2C_2},\left(\frac{\kappa}{2C_4}\right)^{1/(2^*-2)}\right\}\text{.}
\]
If $\vec{w}\ne0$, since $\Vert \vec{w}\Vert\le\Vert \vec{z}\Vert$,
by \eqref{sv2} we may deduce $\Phi(\vec{z})>0$. If $\vec{w}=0$, then
$\vec{z}\in V_0$ and $\vert\vec{z}\vert_\infty\le
C_2\Vert\vec{z}\Vert\le\delta$, using $(F_0^-)$ again, we also have
\[
\Phi(\vec{z})=-\int_\Omega
G(x,\vec{z})\mathrm{d}x=-\int_{\vert\vec{z}\vert\le\delta}
G(x,\vec{z})\mathrm{d}x>0\text{.}
\]
Combining the above argument, we see that \eqref{slo} is true with
$\rho=\min\{\rho_1,\rho_2\}$.
\end{proof}

\begin{remark}
If we replace $(  F_{0}^{-})  $ by the weaker condition: $G(
x,\vec{z})  \leq0$ for $\left\vert \vec{z}\right\vert \leq\delta$,
then we can only obtain $\Phi(  \vec{z}) \geq0$ in the second line
of \eqref{slo}. Namely, we only have a weak local linking in the
sense of Brezis and Nirenberg \cite{MR1127041}, which is not
sufficient to obtain Lemma \ref{sl2} via the result of J.Q. Liu
\cite{MR1110119}. However, if \eqref{se2} is  replaced by the
stronger condition%
\[
\left\vert \nabla F(  x,\vec{z}_{1})  -\nabla F( x,\vec
{z}_{2})  \right\vert \leq\Lambda\left\vert \vec{z}_{1}-\vec{z}%
_{2}\right\vert \text{,\qquad}x\in\Omega\text{,\quad}\vec{z}_{1},\vec{z}_{2}%
\in\mathsf{R}^{2}\text{,}%
\]
then the functional $\Phi$ is of class $C^{2-0}$. According to
Perera \cite[Theorem 2.6]{MR1749421}, we can still obtain the
conclusion of Lemma \ref{sl2}.
\end{remark}

As mentioned before, the proof of Theorem \ref{st2} is more
difficult. Therefore, in what follows we will only prove Theorem
\ref{st2}. Let%
\begin{equation}
X^{-}=\overline{\bigoplus_{i=k}^{\infty}\ker(
-\Delta-\lambda_{i}^{\infty }A_{\infty})
}\text{,\qquad}X^{+}=\bigoplus_{i=1}^{k-1}\ker( -\Delta
-\lambda_{i}^{\infty}A_{\infty}) \text{.}\label{red}
\end{equation}
To verify the condition $( A_{-}) $ and perform saddle point
reduction, we need the following result.

\begin{proposition}
[{\cite[Proposition 3.9 (b)]{MR2651745}}]\label{sp3}Let
$\beta\in\mathcal{M}_{2}( \Omega) $,
$\beta\succeq\lambda_{k-1}^{\infty}A_{\infty}$. Then there exists
$\delta>0$ such that%
\[
-\left\Vert \vec{z}\right\Vert
^{2}+\int_{\Omega}\beta(x)\vec{z}\cdot\vec
{z}\,\mathrm{d}x\geq\delta\left\Vert \vec{z}\right\Vert
^{2}\text{,\qquad for all }\vec
{z}\in X^{+}\text{.}%
\]

\end{proposition}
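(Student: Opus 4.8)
The plan is to treat the left-hand side as a quadratic form $b(\vec{z})=-\|\vec{z}\|^{2}+\int_{\Omega}\beta(x)\vec{z}\cdot\vec{z}\,\mathrm{d}x$ on the \emph{finite-dimensional} space $X^{+}$, show first that $b\ge0$ on $X^{+}$, and then upgrade this to a definite bound $b(\vec{z})\ge\delta\|\vec{z}\|^{2}$ by a compactness argument that ends with a unique continuation step. (If $k=1$ then $X^{+}=\{\vec{0}\}$ and there is nothing to prove, so assume $k\ge2$.)

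First I would record the spectral estimate on $X^{+}$. Let $P_{i}$ denote the $X$-orthogonal projection onto $\ker(-\Delta-\lambda_{i}^{\infty}A_{\infty})$; by the spectral theory of compact self-adjoint operators the eigenfunctions for distinct $\lambda_{i}^{\infty}$ are orthogonal both for $\langle\cdot,\cdot\rangle$ and for the weighted product $(\vec{z},\vec{w})\mapsto\int_{\Omega}A_{\infty}(x)\vec{z}\cdot\vec{w}\,\mathrm{d}x$. Hence, for $\vec{z}=\sum_{i=1}^{k-1}P_{i}\vec{z}\in X^{+}$ and $q_{i}:=\int_{\Omega}A_{\infty}(x)(P_{i}\vec{z})\cdot(P_{i}\vec{z})\,\mathrm{d}x\ge0$,
\[
\|\vec{z}\|^{2}=\sum_{i=1}^{k-1}\lambda_{i}^{\infty}q_{i}\le\lambda_{k-1}^{\infty}\sum_{i=1}^{k-1}q_{i}=\lambda_{k-1}^{\infty}\int_{\Omega}A_{\infty}(x)\vec{z}\cdot\vec{z}\,\mathrm{d}x.
\]
Since $\beta\succeq\lambda_{k-1}^{\infty}A_{\infty}$ implies in particular $\beta(x)\vec{z}\cdot\vec{z}\ge\lambda_{k-1}^{\infty}A_{\infty}(x)\vec{z}\cdot\vec{z}$ for all $(x,\vec{z})$, integrating gives $b(\vec{z})\ge0$ on $X^{+}$.

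Next, because $\dim X^{+}<\infty$ the unit sphere of $X^{+}$ is compact and $b$ attains a minimum $\delta\ge0$ there, say at $\vec{z}_{0}$ with $\|\vec{z}_{0}\|=1$. Suppose, for contradiction, $\delta=0$. Then both inequalities above are equalities for $\vec{z}_{0}$: from $\sum_{i}(\lambda_{k-1}^{\infty}-\lambda_{i}^{\infty})q_{i}=0$ with all terms nonnegative, and $\lambda_{i}^{\infty}<\lambda_{k-1}^{\infty}$ for $i\le k-2$, we get $q_{i}=0$, i.e. $P_{i}\vec{z}_{0}=\vec{0}$, for every $i\le k-2$, so $\vec{z}_{0}\in\ker(-\Delta-\lambda_{k-1}^{\infty}A_{\infty})$; and from $\int_{\Omega}\beta\vec{z}_{0}\cdot\vec{z}_{0}\,\mathrm{d}x=\lambda_{k-1}^{\infty}\int_{\Omega}A_{\infty}\vec{z}_{0}\cdot\vec{z}_{0}\,\mathrm{d}x$ together with the pointwise inequality $\beta-\lambda_{k-1}^{\infty}A_{\infty}\ge0$ we get $(\beta(x)-\lambda_{k-1}^{\infty}A_{\infty}(x))\vec{z}_{0}(x)\cdot\vec{z}_{0}(x)=0$ for a.e. $x$. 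Since $\beta-\lambda_{k-1}^{\infty}A_{\infty}$ is positive definite on a set $\tilde{\Omega}\subset\Omega$ with $|\tilde{\Omega}|>0$ (Definition \ref{sd1}), this forces $\vec{z}_{0}=\vec{0}$ a.e. on $\tilde{\Omega}$.

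The hard part is to deduce from here that $\vec{z}_{0}\equiv\vec{0}$, which contradicts $\|\vec{z}_{0}\|=1$ and finishes the proof. I would invoke unique continuation: $\vec{z}_{0}$ is a solution of the linear second-order elliptic system $-\Delta\vec{z}_{0}=\lambda_{k-1}^{\infty}A_{\infty}(x)\vec{z}_{0}$ whose zeroth-order coefficients are continuous (hence bounded) on $\bar{\Omega}$, and it vanishes on the positive-measure set $\tilde{\Omega}$; by the unique continuation principle for such (weakly coupled) elliptic systems it must vanish identically on $\Omega$. This is the only non-elementary ingredient, and it is precisely the point where the continuity of the entries of $A_{\infty}$ and the coupling structure of the system genuinely enter; the overall argument is essentially that of \cite[Proposition 3.9 (b)]{MR2651745}.
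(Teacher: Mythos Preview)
The paper does not supply its own proof of this proposition; it is simply quoted from \cite[Proposition 3.9 (b)]{MR2651745}. Your argument is correct and is, as you note at the end, essentially the one given in that reference: the spectral identity on $X^{+}$ yields $b\ge 0$, finite dimensionality of $X^{+}$ reduces the strict inequality to ruling out a minimizer $\vec{z}_{0}$ with $b(\vec{z}_{0})=0$, the equality cases force $\vec{z}_{0}\in\ker(-\Delta-\lambda_{k-1}^{\infty}A_{\infty})$ and $\vec{z}_{0}=0$ on $\tilde{\Omega}$, and the unique continuation principle for weakly coupled second-order elliptic systems with bounded coefficients then gives $\vec{z}_{0}\equiv 0$.
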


\begin{remark}
There is also a similar result for the dual case
$\beta\preceq\lambda _{k+1}^{\infty}A_{\infty}$, see
\cite[Proposition 3.9 (a)]{MR2651745}. This will be needed in the
proof of Theorem \ref{st1}.
\end{remark}

Now, for $\vec{v}\in X^{-}$ and $\vec{w}_{1},\vec{w}_{2}\in X^{+}$,
using
Proposition \ref{sp3} and our assumption \eqref{se6} we obtain%
\begin{align*}
  -&\left\langle \nabla\Phi( \vec{v}+\vec{w}_{1}) -\nabla\Phi(
\vec{v}+\vec
{w}_{2}) ,\vec{w}_{1}-\vec{w}_{2}\right\rangle \\
&  =-\int_{\Omega}\left\vert \nabla( \vec{w}_{1}-\vec{w}_{2})
\right\vert ^{2}\mathrm{d}x+\int_{\Omega}( \nabla F(
x,\vec{v}+\vec{w}_{1}) -\nabla F( x,\vec
{v}+\vec{w}_{2}) ) \cdot( \vec{w}_{1}-\vec{w}_{2}) \mathrm{d}x\\
&  \geq-\int_{\Omega}\left\vert \nabla( \vec{w}_{1}-\vec{w}_{2})
\right\vert
^{2}\mathrm{d}x+\int_{\Omega}\beta(x)( \vec{w}_{1}-\vec{w}_{2}) \cdot( \vec{w}_{1}%
-\vec{w}_{2}) \mathrm{d}x\\
&  \geq\delta\left\Vert \vec{w}_{1}-\vec{w}_{2}\right\Vert ^{2}\text{.}%
\end{align*}
Therefore, $\Phi$ satisfies the condition $(A_{-})$ and we obtain a
reduced functional $\varphi:X^{-}\rightarrow\mathsf{R}$, which is of
class $C^{1}$. It suffices to find two non-zero critical points of
$\varphi$.

We want to show that $\varphi$ is coercive. For this, it is quite
natural to pick a sequence $\left\{  \vec{v}_{n}\right\}  $ in
$X^{-}$ such that $\left\Vert \vec{v}_{n}\right\Vert
\rightarrow\infty$. Then consider the
normalization sequence $\big\{  \Vert \vec{v}_{n}\Vert ^{-1}%
\vec{v}_{n}\big\}  $. However, since $\dim X^{-}=\infty$, the weak
limit of the normalization sequence may be the zero element in
$X^{-}$. This makes it difficult to prove that $\varphi(
\vec{v}_{n}) \rightarrow+\infty$.

To get around this difficulty, as in \cite{MR2488059} we consider
$\Phi_{1}$, the restriction of $\Phi$ on $X^{-}$. Then $\Phi_{1}\in
C^{1}(X^{-},\mathsf{R})$. The following `non vanishing lemma' is the
key ingredient of our approach.

\begin{lemma}
\label{sl4}Let $\left\{  \vec{v}_{n}\right\}  $ be a sequence in
$X^{-}$ such that $\Phi_{1}( \vec{v}_{n}) \leq c$ and $\left\Vert
\vec{v}_{n}\right\Vert \rightarrow\infty$. Denote
$\vec{v}_{n}^{0}=\left\Vert \vec{v}_{n}\right\Vert
^{-1}\vec{v}_{n}$. Then there is a subsequence of $\left\{  \vec{v}_{n}%
^{0}\right\}  $ which converges weakly to some point
$\vec{v}^{0}\neq\vec{0}$.
\end{lemma}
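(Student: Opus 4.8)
The plan is to argue by contradiction: suppose that along every subsequence $\vec{v}_{n}^{0}\rightharpoonup\vec{0}$ in $X^{-}$. Since $X^{-}\hookrightarrow L^{2}(\Omega)\times L^{2}(\Omega)$ is compact, after passing to a subsequence we would have $\vec{v}_{n}^{0}\to\vec{0}$ strongly in $L^{2}$. I would then expand $\Phi_{1}(\vec{v}_{n})=\Phi(\vec{v}_{n})$. Writing $F(x,\vec{z})=\tfrac12 A_{\infty}(x)\vec{z}\cdot\vec{z}+\big(F(x,\vec{z})-\tfrac12 A_{\infty}(x)\vec{z}\cdot\vec{z}\big)$, the quadratic part contributes $\tfrac12\int_{\Omega}(|\nabla\vec{v}_{n}|^{2}-A_{\infty}(x)\vec{v}_{n}\cdot\vec{v}_{n})\,\mathrm{d}x$, and since $\vec{v}_{n}\in X^{-}=\overline{\bigoplus_{i\ge k}\ker(-\Delta-\lambda_{i}^{\infty}A_{\infty})}$ with $\lambda_{k}^{\infty}=1$, this quadratic form is $\ge 0$ on $X^{-}$ (this is the analogue of the estimate used in the proof of Lemma \ref{sl2}, coming from the variational characterisation of $\lambda_{k}^{\infty}$). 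So after dividing by $\|\vec{v}_{n}\|^{2}$,
\[
\frac{\Phi_{1}(\vec{v}_{n})}{\|\vec{v}_{n}\|^{2}}\ge-\frac{1}{\|\vec{v}_{n}\|^{2}}\int_{\Omega}\Big(F(x,\vec{v}_{n})-\tfrac12 A_{\infty}(x)\vec{v}_{n}\cdot\vec{v}_{n}\Big)\,\mathrm{d}x\text{.}
\]

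Next I would control the remainder integral. From the linear growth condition \eqref{se2}, $F(x,\vec{z})-\tfrac12 A_{\infty}(x)\vec{z}\cdot\vec{z}$ grows at most quadratically: $|F(x,\vec{z})-\tfrac12 A_{\infty}(x)\vec{z}\cdot\vec{z}|\le C|\vec{z}|^{2}+C$ for some constant $C$ depending on $\Lambda$ and $\|A_{\infty}\|_{\infty}$. Hence
\[
\frac{1}{\|\vec{v}_{n}\|^{2}}\Big|\int_{\Omega}\Big(F(x,\vec{v}_{n})-\tfrac12 A_{\infty}(x)\vec{v}_{n}\cdot\vec{v}_{n}\Big)\,\mathrm{d}x\Big|\le C\,|\vec{v}_{n}^{0}|_{2}^{2}+\frac{C|\Omega|}{\|\vec{v}_{n}\|^{2}}\longrightarrow0\text{,}
\]
because $\vec{v}_{n}^{0}\to\vec{0}$ in $L^{2}$ and $\|\vec{v}_{n}\|\to\infty$. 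Combining with the previous display gives $\liminf_{n}\Phi_{1}(\vec{v}_{n})/\|\vec{v}_{n}\|^{2}\ge 0$; but actually, since $\Phi_{1}(\vec{v}_{n})\le c$ is bounded above and $\|\vec{v}_{n}\|^{2}\to\infty$, we have $\Phi_{1}(\vec{v}_{n})/\|\vec{v}_{n}\|^{2}\to$ something $\le 0$, so in fact the ratio tends to $0$ and the remainder term, the quadratic term, everything must be squeezed. This already forces the decomposition $\vec{v}_{n}^{0}=\vec{v}_{n}^{0,k}+\tilde{\vec{v}}_{n}^{0}$ (component in $\ker(-\Delta-\lambda_{k}^{\infty}A_{\infty})$ plus the rest, on which the quadratic form is bounded below by a positive multiple of the norm squared) to satisfy $\tilde{\vec{v}}_{n}^{0}\to\vec{0}$ in $X^{-}$; since $\ker(-\Delta-\lambda_{k}^{\infty}A_{\infty})$ is finite dimensional and $\vec{v}_{n}^{0,k}\to\vec{0}$ in $L^{2}$ would force $\vec{v}_{n}^{0,k}\to\vec{0}$ in $X^{-}$ too, we would conclude $\|\vec{v}_{n}^{0}\|\to 0$, contradicting $\|\vec{v}_{n}^{0}\|=1$.

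The main obstacle is making the squeeze precise: the quadratic form $Q(\vec{z})=\int_{\Omega}(|\nabla\vec{z}|^{2}-A_{\infty}(x)\vec{z}\cdot\vec{z})\,\mathrm{d}x$ on $X^{-}$ is only positive \emph{semi}-definite, with kernel $\ker(-\Delta-\lambda_{k}^{\infty}A_{\infty})$, so merely knowing $Q(\vec{v}_{n}^{0})\to 0$ does not immediately give $\vec{v}_{n}^{0}\to\vec{0}$; one must split off the kernel. The clean way is to write $X^{-}=E_{k}\oplus Y$ where $E_{k}=\ker(-\Delta-\lambda_{k}^{\infty}A_{\infty})$ is finite dimensional and $Y=\overline{\bigoplus_{i\ge k+1}\ker(-\Delta-\lambda_{i}^{\infty}A_{\infty})}$, on which $Q(\vec{z})\ge\delta_{0}\|\vec{z}\|^{2}$ for some $\delta_{0}>0$ (from $\lambda_{k+1}^{\infty}>1$). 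Then $Q(\vec{v}_{n}^{0})=Q(\tilde{\vec{v}}_{n}^{0})\ge\delta_{0}\|\tilde{\vec{v}}_{n}^{0}\|^{2}$ where $\tilde{\vec{v}}_{n}^{0}$ is the $Y$-component, and the smallness of $Q(\vec{v}_{n}^{0})$ forces $\tilde{\vec{v}}_{n}^{0}\to\vec{0}$; the $E_{k}$-component goes to zero by compact embedding since it is bounded in the finite-dimensional $E_{k}$ and tends to $\vec{0}$ in $L^{2}$. This yields $\|\vec{v}_{n}^{0}\|\to 0$, the desired contradiction, so some subsequence of $\{\vec{v}_{n}^{0}\}$ must converge weakly to a nonzero limit.
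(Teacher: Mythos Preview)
Your proof is correct, but it is substantially more elaborate than the paper's argument, and the extra structure you invoke is not needed.

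The paper proceeds directly, without contradiction and without any reference to $A_{\infty}$ or the spectral decomposition of $X^{-}$. One simply picks a weakly convergent subsequence $\vec{v}_{n}^{0}\rightharpoonup\vec{v}^{0}$, notes that $\vec{v}_{n}^{0}\to\vec{v}^{0}$ in $L^{2}$ by compact embedding, and uses only the crude bound $|F(x,\vec{z})|\le\tfrac{\Lambda}{2}|\vec{z}|^{2}$ coming from \eqref{se2} and $F(x,\vec{0})=0$ to write
\[
2c\ \ge\ 2\Phi_{1}(\vec{v}_{n})\ \ge\ \|\vec{v}_{n}\|^{2}-\Lambda\,|\vec{v}_{n}|_{2}^{2}\text{.}
\]
Dividing by $\|\vec{v}_{n}\|^{2}$ and letting $n\to\infty$ gives $0\ge 1-\Lambda\,|\vec{v}^{0}|_{2}^{2}$, so $|\vec{v}^{0}|_{2}^{2}\ge\Lambda^{-1}>0$ and $\vec{v}^{0}\ne\vec{0}$. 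That is the entire proof.

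Your route instead exploits that the quadratic form $Q(\vec{z})=\int_{\Omega}(|\nabla\vec{z}|^{2}-A_{\infty}(x)\vec{z}\cdot\vec{z})\,\mathrm{d}x$ is nonnegative on $X^{-}$, and then splits off the finite-dimensional kernel $E_{k}=\ker(-\Delta-A_{\infty})$ to upgrade $Q(\vec{v}_{n}^{0})\to 0$ into $\|\vec{v}_{n}^{0}\|\to 0$. This is valid (the eigenspaces are mutually orthogonal both in $X$ and in the weighted $L^{2}$-inner product $\int_{\Omega}A_{\infty}\,\cdot\,\cdot\,\mathrm{d}x$, so the cross terms in $Q$ really do vanish), but it uses the specific definition of $X^{-}$ and the eigenvalue condition $\lambda_{k}^{\infty}=1$, none of which the lemma actually requires. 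The paper's argument would go through unchanged for \emph{any} closed subspace in place of $X^{-}$; indeed Remark~\ref{rek1} emphasises that this lemma is meant to be the elementary, derivative-free step. The spectral machinery you bring in buys nothing here, although it would become the natural approach in a variant where $F$ lacked a global quadratic bound but still had the asymptotic structure near infinity.
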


\begin{proof}
The proof is quite similar to that of \cite[Lemma 3.2]{MR2488059},
where instead of $\Phi_{1}( \vec{v}_{n}) \leq c$, it is assumed that
$\nabla\Phi_{1}( \vec {v}_{n}) \rightarrow0$. Since $\left\{
\vec{v}_{n}^{0}\right\}  $ is bounded, up to a subsequence, we may
assume that $\vec{v}_{n}^{0}\rightharpoonup\vec
{v}^{0}$ in $X^{-}$. The compactness of the embedding%
\[
X^{-}\hookrightarrow X\hookrightarrow L^{2}( \Omega) \times L^{2}(
\Omega)
\]
implies that $\vec{v}_{n}^{0}\rightarrow\vec{v}^{0}$ in $L^{2}(
\Omega) \times
L^{2}( \Omega) $. By \eqref{se2} we have%
\[
\left\vert F( x,\vec{z}) \right\vert
\leq\frac{1}{2}\Lambda\left\vert \vec
{z}\right\vert ^{2}\text{,\qquad}( x,\vec{z}) \in\Omega\times\mathsf{R}%
^{2}\text{.}%
\]
Therefore
\begin{align*}
2c \geq2\Phi_{1}( \vec{v}_{n})  &  =\int_{\Omega}\left\vert \nabla\vec{v}%
_{n}\right\vert ^{2}\mathrm{d}x-\int_{\Omega}2F( x,\vec{v}_{n}) \mathrm{d}x\\
&  \geq\int_{\Omega}\left\vert \nabla\vec{v}_{n}\right\vert
^{2}\mathrm{d}x-\Lambda
\int_{\Omega}\left\vert \vec{v}_{n}\right\vert ^{2}\mathrm{d}x\\
&  =\left\Vert \vec{v}_{n}\right\Vert ^{2}-\Lambda\left\vert \vec{v}%
_{n}\right\vert _{2}^{2}\text{.}%
\end{align*}
Multiplying by $\left\Vert \vec{v}_{n}\right\Vert ^{-2}$ on both
sides, we
deduce%
\[
2c\left\Vert \vec{v}_{n}\right\Vert ^{-2}\geq1-\Lambda\vert\vec{v}_{n}^{0}%
\vert_{2}^{2}\text{.}%
\]
Since
$\vert\vec{v}_{n}^{0}\vert_{2}\rightarrow\vert\vec{v}^{0}\vert_{2}$
and $\left\Vert \vec{v}_{n}\right\Vert ^{-2}\rightarrow0$, the above
inequality implies that
$\vert\vec{v}^{0}\vert_{2}^{2}\geq\Lambda^{-1}$ and hence
$\vec{v}^{0}\neq\vec{0}$.
\end{proof}

\begin{remark}
This is the only place where we need \eqref{se2}. In the case of
Theorem \ref{st1}, the reduced functional $\varphi$ is defined on a
finite dimensional subspace. Hence it is easy to obtain the
coerciveness of $\varphi$ using the assumption $(F_\infty^+)$.
Therefore, in Theorem \ref{st1}, we may replace \eqref{se2} with a
subcritical growth condition.
\end{remark}

\begin{lemma}
\label{sll3}The functional $\Phi_{1}:X^{-}\rightarrow\mathsf{R}$ is
coercive, and bounded from below.
\end{lemma}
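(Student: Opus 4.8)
The plan is to show that $\Phi_{1}(\vec{v})\to+\infty$ as $\|\vec{v}\|\to\infty$ with $\vec{v}\in X^{-}$; boundedness from below is then automatic, since a coercive continuous functional on a Hilbert space attains a finite infimum (or, more elementarily, is bounded below on bounded sets by continuity and bounded below outside a large ball by coercivity). I would argue by contradiction: suppose $\Phi_{1}$ is not coercive. Then there is a constant $c$ and a sequence $\{\vec{v}_{n}\}\subset X^{-}$ with $\|\vec{v}_{n}\|\to\infty$ and $\Phi_{1}(\vec{v}_{n})\leq c$. Set $\vec{v}_{n}^{0}=\|\vec{v}_{n}\|^{-1}\vec{v}_{n}$. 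By Lemma \ref{sl4}, up to a subsequence $\vec{v}_{n}^{0}\rightharpoonup\vec{v}^{0}$ weakly in $X^{-}$ with $\vec{v}^{0}\neq\vec{0}$, and by the compact embedding $X^{-}\hookrightarrow L^{2}(\Omega)\times L^{2}(\Omega)$ we have $\vec{v}_{n}^{0}\to\vec{v}^{0}$ strongly in $L^{2}\times L^{2}$.

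Next I would extract information from the hypothesis $(F_{\infty}^{-})$ together with the fact that $\vec{v}^{0}$ lies in the infinite part $X^{-}$ associated with eigenvalues $\lambda_{i}^{\infty}\geq 1$ (so that $\|\vec{v}\|^{2}\leq\int_{\Omega}A_{\infty}(x)\vec{v}\cdot\vec{v}\,\mathrm{d}x$ for all $\vec{v}\in X^{-}$). Write
\[
\Phi_{1}(\vec{v}_{n})=\frac{1}{2}\int_{\Omega}\Big(|\nabla\vec{v}_{n}|^{2}-A_{\infty}(x)\vec{v}_{n}\cdot\vec{v}_{n}\Big)\mathrm{d}x-\int_{\Omega}\Big(F(x,\vec{v}_{n})-\frac{1}{2}A_{\infty}(x)\vec{v}_{n}\cdot\vec{v}_{n}\Big)\mathrm{d}x\text{.}
\]
The first integral is $\leq 0$ on $X^{-}$. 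For the second, the idea is that on the set where $\vec{v}^{0}\neq\vec{0}$ we have $|\vec{v}_{n}(x)|\to\infty$, so by $(F_{\infty}^{-})$ the integrand $F(x,\vec{v}_{n})-\frac{1}{2}A_{\infty}\vec{v}_{n}\cdot\vec{v}_{n}\to-\infty$ pointwise on a set of positive measure (namely $\{\vec{v}^{0}\neq\vec{0}\}$, which has positive measure since $\vec{v}^{0}\neq\vec{0}$ after passing to a further subsequence giving a.e.\ convergence). Combined with a uniform lower bound coming from the linear growth \eqref{se2} (which makes $F(x,\vec{z})-\frac{1}{2}A_{\infty}(x)\vec{z}\cdot\vec{z}$ bounded below by a fixed constant independent of $\vec{z}$, since both $F$ and the quadratic term are controlled by $\Lambda|\vec{z}|^{2}$ — more precisely, $(F_{\infty}^{-})$ forces this difference to be bounded above, and one checks it is also bounded below using \eqref{se2}), Fatou's lemma gives $\int_{\Omega}\big(F(x,\vec{v}_{n})-\frac12 A_{\infty}\vec{v}_{n}\cdot\vec{v}_{n}\big)\mathrm{d}x\to-\infty$. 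Hence $-\int_{\Omega}(\cdots)\mathrm{d}x\to+\infty$, so $\Phi_{1}(\vec{v}_{n})\to+\infty$, contradicting $\Phi_{1}(\vec{v}_{n})\leq c$.

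The main obstacle I anticipate is the application of Fatou's lemma: one needs a genuine integrable lower bound for the sequence of integrands $F(x,\vec{v}_{n})-\frac{1}{2}A_{\infty}(x)\vec{v}_{n}\cdot\vec{v}_{n}$, uniformly in $n$. The condition $(F_{\infty}^{-})$ by itself only says this quantity tends to $-\infty$; it does not a priori exclude it being very positive for intermediate values of $|\vec{z}|$. So I would first establish, from $(F_{\infty}^{-})$ (which gives an upper bound for large $|\vec{z}|$) together with the continuity of $F$ on $\bar{\Omega}\times\mathsf{R}^{2}$ and \eqref{se2}, that there is a constant $C$ with $F(x,\vec{z})-\frac{1}{2}A_{\infty}(x)\vec{z}\cdot\vec{z}\leq C$ for all $(x,\vec{z})$; and a (possibly different) handling for the lower side: since $|F(x,\vec{z})|\leq\frac12\Lambda|\vec{z}|^2$ and $A_\infty$ is bounded, the difference is $\geq -C'|\vec{z}|^2$, which is not integrable-uniformly, so the cleaner route is to apply Fatou to the \emph{negative part}: $\big(F(x,\vec v_n)-\tfrac12 A_\infty \vec v_n\cdot\vec v_n\big)^{+}\leq C$ is bounded, hence $\limsup\int(\cdots)^{+}<\infty$, while $\int(\cdots)^{-}\to+\infty$ by Fatou applied to the nonnegative functions $(\cdots)^{-}$ on the set of positive measure where $|\vec v_n|\to\infty$. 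This split is the delicate point; once it is in place the rest is routine.
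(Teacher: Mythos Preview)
Your overall strategy matches the paper's: contradict coercivity, invoke Lemma~\ref{sl4} to get a nonzero weak limit $\vec{v}^{0}$, then use $(F_\infty^-)$ and Fatou on the set $\{\vec{v}^{0}\neq 0\}$ to force $\Phi_1(\vec{v}_n)\to+\infty$. However, you have the key inequality on $X^{-}$ backwards. For an eigenfunction at eigenvalue $\lambda_i^\infty\geq 1$ one has $\int|\nabla\vec{z}|^{2}=\lambda_i^\infty\int A_\infty\vec{z}\cdot\vec{z}$, hence on $X^{-}$ the correct relation is $\|\vec{v}\|^{2}\geq\int_\Omega A_\infty(x)\vec{v}\cdot\vec{v}\,\mathrm{d}x$, so the first integral in your decomposition is $\geq 0$, not $\leq 0$. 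As you wrote it, ``first term $\leq 0$'' plus ``second term $\to+\infty$'' does not give $\Phi_1(\vec{v}_n)\to+\infty$; with the corrected sign it does immediately, since then $\Phi_1(\vec{v}_n)\geq -\int_\Omega\big(F(x,\vec{v}_n)-\tfrac12 A_\infty\vec{v}_n\cdot\vec{v}_n\big)\mathrm{d}x$.

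On the Fatou step, your positive/negative-part split works, but the paper's route is simpler and avoids the worry you raise. From $(F_\infty^-)$ alone (plus continuity on $\bar\Omega\times\{|\vec z|\le R\}$) one gets a \emph{uniform} constant $M$ with $\tfrac12 A_\infty(x)\vec{z}\cdot\vec{z}-F(x,\vec{z})\geq -M$ for all $(x,\vec z)$; then split the domain as $\Theta\cup(\Omega\setminus\Theta)$ with $\Theta=\{\vec v^{0}\neq 0\}$, apply Fatou on $\Theta$ (integrand bounded below by $-M$ and tending to $+\infty$ pointwise), and bound the $\Omega\setminus\Theta$ piece by $-M|\Omega\setminus\Theta|$. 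No quadratic lower bound on $|\vec{z}|^2$ is needed, so the ``delicate point'' you flag disappears.
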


\begin{proof}
Assume for a contradiction that for some $\left\{
\vec{v}_{n}\right\} \subset X^{-}$ and $c>0$ we have
\begin{equation}
\Phi_{1}(\vec{v}_{n})\leq c\text{,\qquad}\left\Vert
\vec{v}_{n}\right\Vert
\rightarrow\infty\text{.}\label{sx}%
\end{equation}
Let $\vec{v}_{n}^{0}=\left\Vert \vec{v}_{n}\right\Vert ^{-1}v_{n}$,
by Lemma \ref{sl4}, up to a subsequence we have
$\vec{v}_{n}^{0}\rightharpoonup\vec
{v}^{0}$ for some $\vec{v}^{0}\neq\vec{0}$. Let%
\[
\Theta=\left\{  \left.  x\in\Omega\right\vert
\,\vec{v}^{0}(x)\neq0\right\}
\text{,}%
\]
then $\left\vert \Theta\right\vert >0$. For $x\in\Theta$ we have%
\[
\left\vert \vec{v}_{n}(x)\right\vert =\left\Vert
\vec{v}_{n}\right\Vert
\left\vert \vec{v}_{n}^{0}(x)\right\vert \rightarrow\infty\text{.}%
\]
By $(F_{\infty}^{-})$ and the Fatou Lemma,%
\[
\int_{\Theta}\left(  \frac{1}{2}A_{\infty}(x)\vec{v}_{n}\cdot\vec{v}%
_{n}-F(x,\vec{v}_{n})\right)
\mathrm{d}x\rightarrow+\infty\text{,\qquad as
}n\rightarrow\infty\text{.}%
\]

On the other hand, $(F_{\infty}^{-})$ also implies the existence of
$M>0$ such
that%
\begin{equation}
\frac{1}{2}A_{\infty}(x)\vec{z}\cdot\vec{z}-F(x,\vec{z})\geq-M\text{,\qquad
}(x,\vec{z})\in\Omega\times\mathsf{R}^{2}\text{.}\label{see}%
\end{equation}
Therefore,%
\begin{align*}
\Phi_{1}(\vec{v}_{n}) &  =\frac{1}{2}\int_{\Omega}\left\vert \nabla\vec{v}%
_{n}\right\vert ^{2}\mathrm{d}x-\int_{\Omega}F(x,\vec{v}_{n})\mathrm{d}x\\
&  \geq\int_{\Omega}\left(
\frac{1}{2}A_{\infty}(x)\vec{v}_{n}\cdot\vec
{v}_{n}-F(x,\vec{v}_{n})\right)  \mathrm{d}x\\
&  =\left(  \int_{\Theta}+\int_{\Omega\backslash\Theta}\right)
\left(
\frac{1}{2}A_{\infty}(x)\vec{v}_{n}\cdot\vec{v}_{n}-F(x,\vec{v}_{n})\right)
\mathrm{d}x\\
&  \geq\int_{\Theta}\left(
\frac{1}{2}A_{\infty}(x)\vec{v}_{n}\cdot\vec
{v}_{n}-F(x,\vec{v}_{n})\right)  \mathrm{d}x-M\left\vert
\Omega\backslash
\Theta\right\vert \rightarrow+\infty\text{.}%
\end{align*}
This contradicts with \eqref{sx}. Thus $\Phi_{1}$ is coercive. It
follows that $\Phi_{1}$ is bounded from below.
\end{proof}

\begin{remark}\label{rek1}
In \cite[Lemma 3.2]{MR2488059}, another version of `non vanishing
lemma' (as mentioned in the proof of Lemma \ref{sl4}) is proved and
used in \cite[Lemma 3.4]{MR2488059} to show that $\Phi_{1}$
satisfies the $( PS) $ condition. Then the coerciveness of
$\Phi_{1}$ is obtained via a well-known result of Li \cite{Li86},
see also \cite{MR1044221}. Our argument in Lemmas \ref{sl4} and
\ref{sll3} does not involve the derivative information of
$\Phi_{1}$, hence is considerably simpler.
\end{remark}

\begin{lemma}
\label{sl3}Under the assumption of Theorem \ref{st2}, the functional
$\varphi$ is bounded from below. Moreover, $\varphi$ satisfies the
$( PS) $ condition.
\end{lemma}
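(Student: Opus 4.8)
The plan is to transfer the good properties of $\Phi_1$ (the restriction of $\Phi$ to $X^-$), established in Lemma \ref{sll3}, to the reduced functional $\varphi$. First I would prove that $\varphi$ is bounded from below. Recall from \eqref{e} that in case $(A_-)$ we have $\varphi(\vec{v}) = \max_{\vec{w}\in X^+} \Phi(\vec{v}+\vec{w})$, and in particular $\varphi(\vec{v}) \geq \Phi(\vec{v}+\vec{0}) = \Phi_1(\vec{v})$ for every $\vec{v}\in X^-$. Since Lemma \ref{sll3} gives that $\Phi_1$ is bounded from below, say $\Phi_1 \geq -C_0$, it follows immediately that $\varphi \geq -C_0$ on $X^-$.

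Next I would establish the $(PS)$ condition for $\varphi$. By Proposition \ref{p2}, since $\nabla\Phi = \vec{1}_X - K$ with $K$ compact (the Nemytskii operator associated with $\nabla F$ composed with the compact embedding $X\hookrightarrow L^2\times L^2$, using \eqref{se2}) and $\nabla\Phi$ is bounded on bounded sets by the linear growth \eqref{se2}, the reduced gradient has the form $\nabla\varphi = \vec{1}_{X^-} - Q$ with $Q:X^-\to X^-$ compact. Hence, given a $(PS)$ sequence $\{\vec{v}_n\}$ for $\varphi$, i.e.\ $\varphi(\vec{v}_n)$ bounded and $\nabla\varphi(\vec{v}_n)\to \vec{0}$, it suffices to show $\{\vec{v}_n\}$ is bounded in $X^-$; then $\vec{v}_n = Q(\vec{v}_n) + \nabla\varphi(\vec{v}_n)$ has a convergent subsequence by compactness of $Q$. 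To get boundedness I would argue by contradiction: if $\|\vec{v}_n\|\to\infty$, then from $\varphi(\vec{v}_n)\geq \Phi_1(\vec{v}_n)$ and the coercivity of $\Phi_1$ (Lemma \ref{sll3}) we would get $\varphi(\vec{v}_n)\to+\infty$, contradicting the boundedness of $\varphi(\vec{v}_n)$. In fact this same inequality shows $\varphi$ itself is coercive, which is a cleaner route: a coercive $C^1$ functional whose gradient is a compact perturbation of the identity automatically satisfies $(PS)$.

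The main obstacle is making sure the hypotheses of Proposition \ref{p2} genuinely apply here — specifically that $\nabla\Phi$ is bounded on bounded subsets of $X$ and is a compact perturbation of the identity. Both follow from the linear growth condition \eqref{se2}: it guarantees $|\nabla F(x,\vec{z})| \leq \Lambda|\vec{z}|$, so the Nemytskii map $\vec{z}\mapsto \nabla F(\cdot,\vec{z})$ sends $L^2\times L^2$ continuously into $L^2\times L^2$ and sends bounded sets to bounded sets; composing with the compact embeddings $X\hookrightarrow L^2\times L^2 \hookrightarrow X$ yields the compact operator $K$, and $\nabla\Phi = \vec{1}_X - K$. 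Once this structural fact is in place, the rest of the argument is the short comparison $\varphi \geq \Phi_1$ together with Lemma \ref{sll3}; I expect the write-up to be only a few lines.
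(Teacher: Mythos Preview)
Your proposal is correct and follows essentially the same route as the paper: compare $\varphi(\vec v)\ge\Phi(\vec v)=\Phi_1(\vec v)$ via \eqref{e}, invoke Lemma~\ref{sll3} for coercivity and the lower bound, apply Proposition~\ref{p2} to get $\nabla\varphi=\vec{1}_{X^-}-Q$ with $Q$ compact, and conclude $(PS)$ from boundedness plus the compact-perturbation structure. The paper cites \cite[Proposition 2.2]{MR1411681} for that last implication, whereas you spell out the one-line argument directly; otherwise the proofs coincide.
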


\begin{proof}
Let $K:X\rightarrow X$ be defined as%
\[
\left\langle K\vec{z},\vec{w}\right\rangle =\int_{\Omega}\nabla F(
x,\vec{z})
\cdot\vec{w}\,\mathrm{d}x\text{.}%
\]
Then $K$ is compact and $\nabla\Phi=\vec{1}_{X}-K$. Obviously
$\nabla\Phi$ maps bounded sets to bounded sets. By Proposition
\ref{p2}, $\nabla\varphi$ is also a compact perturbation of
$\vec{1}_{(X^{-})}$.

By the definition of the reduced functional $\varphi$, we have%
\[
\varphi( \vec{v}) =\max_{\vec{w}\in X^{+}}\Phi( \vec{v}+\vec{w})
\geq\Phi(
\vec{v}) =\Phi_{1}( \vec{v}) \text{.}%
\]
Using Lemma \ref{sll3} we see that $\varphi$ is also coercive and
bounded from below. In particular, any $( PS) $ sequence of
$\varphi$ is bounded. Applying \cite[Proposition 2.2]{MR1411681}, we
deduce that $\varphi$ satisfies $( PS) $.
\end{proof}

\begin{proof}
[Proof of Theorem \ref{st2}]We prove the case (i). By Lemma
\ref{sl3},
$\varphi$ satisfies the $( PS) $ condition, and bounded from below. Note that%
\[
\mu=\dim X^{+}=d_{k-1}^{\infty}\text{,}%
\]
by Theorem \ref{t1} and Lemma \ref{sl2} we obtain%
\[
C_{d_{m}^{0}-d_{k-1}^{\infty}}( \varphi,\vec{0}) \cong
C_{d_{m}^{0}}(
\Phi,\vec{0}) \neq0\text{.}%
\]
Now, if $d_{m}^{0}\neq d_{k-1}^{\infty}$, the deseired result
follows from Proposition \ref{ls}.
\end{proof}

\begin{remark}\label{rek2}
If $A_{0}=A_{\infty}$, then the decompositions of $X$ in Lemma
\ref{sl2} and in \eqref{red} are related to the same eigenvalue
problem \eqref{se3}. For the case proved in this section, namely
Theorem \ref{st2} (ii), we have $m>k$ and $V_{0}\oplus V_{+}\subset
X^{-}$.

As in \cite{MR2488059}, it is then easy to show that $\varphi$ has a
local linking with respect to the decomposition $X^{-}=\left(
V_{-}\cap X^{-}\right)  \oplus\left(  V_{0}\oplus V_{+}\right) $.
Then the local linking version of the three critical points theorem
\cite{MR1127041,MR802575} yields the desired result, we don't need
Theorem \ref{t1}.

On the other hand, if $A_{0}\neq A_{\infty}$, then the above
inclusion of the decompositions is false, the local linking property
of $\Phi$ does not descend to $\varphi$. Hence the local linking version of the three critical points theorem is not applicable, and our Theorem \ref{t1} is
crucial.
\end{remark}

\paragraph*{acknowledgements}
This work was completed while S.B. Liu was visiting the Institute of
Mathematics, Chinese Academy of Sciences; and the Institute of Mathematics, Peking University. S.B. Liu would like
to thank both institutes for invitation and hospitality.


\end{document}